\newtheorem{thm}{Theorem}[section]
\newtheorem{lem}[thm]{Lemma}
\newtheorem{prou}[thm]{Definition}
\newtheorem{rmk}[thm]{Remark}
\numberwithin{equation}{section}
\journal{}
\begin{document}
\begin{spacing}{1.15}
\begin{frontmatter}
\title{\textbf{The
resistance distance of  a dual number weighted  graph }}

\author[label1]{Yu Li} \ead{liyuzone@163.com}
\author[label1]{Lizhu Sun\corref{cor}}\ead{lizhusun@hrbeu.edu.cn}
\author[label1]{Changjiang Bu} \ead{buchangjiang@hrbeu.edu.cn}

\cortext[cor]{Corresponding author}

\address[label1]{College of Mathematical Sciences, Harbin Engineering University, Harbin, PR China}

\begin{abstract}

For a graph $G=(V,E)$,  assigning  each edge  $e\in E$    a   weight of a dual number $w(e)=1+\widehat{a}_{e}\varepsilon$, the weighted graph $G^{w}=(V,E,w)$ is called a dual number weighted graph, where $-\widehat{a}_{e}$ can be regarded as the perturbation of the unit resistor  on edge $e$ of $G$.
%A graph $G$ with a dual number $w(e)=1+\widehat{a}_{e}\varepsilon$ weighted to its each edge $e$ is called a dual graph and denoted by $G^{w}.$ For a connected  graph $G$, supposing the resistor $\Omega_{e}=\frac{1}{w(e)}=1-\widehat{a}_{e}\varepsilon$ is placed  on its each edge $e$, the dual part $-\widehat{a}_{e}$ of $\Omega_{e}$
 For a connected dual number weighted graph  $G^{w}$,  we give some expressions and block representations of generalized inverses of the Laplacian matrix of $G^{w}$.  And using these results, we derive the explicit formulas of  the resistance distance and Kirchhoff index of  $G^{w}$. %\blue{The infinitesimal part of the weight of each edge of $G^{w}$ is regarded as the perturbation of its standard part, and the opposite number of this infinitesimal part can be regarded as the perturbation of a unit resistance on each edge of  $G$.}
 We give the perturbation bounds for the resistance distance and Kirchhoff index of  $G$. In particular, when only the edge $e=\{i,j\}$ of $G$ is perturbed, we give the perturbation bounds for the Kirchhoff index and resistance distance between vertices  $i$ and $j$ of $G$, respectively.

%The weighted graph $G^{w}=(V,E,w)$ is called  the dual graph, that is the graph $G=(V,E)$ with the dual number $w(e)=1+\widehat{a}_{e}\varepsilon$ weighted to each edge $e$.

%In this paper, the dual  number weighted graph is defined as the weighted graph $G^{w}=(V,E,w)$ that is the graph $G=(V,E)$ with the dual number $w(e)=1+\widehat{a}_{e}\varepsilon$ weighted to each edge $e$. For  a connected dual graph $G^{w}$, the resistance distance of $G^{w}$ is  when the resistor $\Omega_{e}=1-\widehat{a}_{e}\varepsilon$ is placed on each edge $e$ of $G^{w}$.

%The weighted graph $G^{w}=(V,E,w)$ is called  the dual graph that is the graph $G=(V,E)$ with the dual number $w(e)=1+\widehat{a}_{e}\varepsilon$ weighted to each edge $e$.

%For a  graph $G=(V,E)$, to each edge $e$ of $G$ weighted a dual number $w(e)=1+\widehat{a}_{e}\varepsilon$,  the weighted graph $G^{w}=(V,E,w)$ is called  the dual graph. %A connected dual graph $G^{w}$ can be considered as a network with resistor $\frac{1}{w(e)}=1-\widehat{a}_{e}\varepsilon$ placed  on its each edge $e$, where $-\widehat{a}_{e}$ is the perturbation of resistor  on edge $e$ of $G$.
%For a connected dual graph $G^{w}$, suppose that the resistor $\Omega_{e}=\frac{1}{w(e)}=1-\widehat{a}_{e}\varepsilon$ is placed  on its each edge $e$, where $-\widehat{a}_{e}$ is regarded as the perturbation of resistor  on edge $e$ of $G$.

\end{abstract}

\begin{keyword} dual number weighted  graph; resistance distance; Kirchhoff index;  generalized inverse \\
\emph{AMS classification(2020):} 05C09; 05C12; 05C50; 15A10.
\end{keyword}

\end{frontmatter}

\section{Introduction}

In 1873, Clifford \cite{clifford1871preliminary} proposed  the dual number in studying the motion of
rigid bodies.  The applications of dual numbers in many fields attract much attention, such as  kinematics and mechanics \cite{Geometry1010}, robotics
 \cite{gu1987dual}, dynamic analysis \cite{fischer} and brain science \cite{wei2024singular}, etc. The infinitesimal  parts  of dual numbers can be regarded as  perturbations of their
 standard parts. In 2024, Qi and Cui \cite{qi2024dualww} defined and studied the dual  Markov chain, in which the infinitesimal part of the dual transition probability matrix is regarded  as the perturbation matrix. Some spectral properties of dual unit gain graphs were given and  applied  in multi-agent formation control \cite{qi2023dualquater,cc2024spectral,qi2024eigenvalues}.

The existence and expressions of the generalized inverses of dual matrices play a significant role in the study of solutions for systems of linear dual equations \cite{angeles2012dual,udwadia2021dual,cui2025genuine}.
Different from real matrices, some generalized inverses such as $\{1\}$-inverses and Moore-Penrose inverses  of dual matrices  do not always exist. The existence of $\{1\}$-inverses \cite{de2018generalized,udwadia2021does} and  Moore-Penrose inverses \cite{pennestri2018moore,wang2021characterizations}  of dual matrices were given,
by which, the solutions of  several dual systems were obtained \cite{udwadia2021dual,pennestri2018moore}.

 Let $G=(V,E)$ be a connected graph. The effective resistance between vertices $i$ and $j$ in $G$ is called the resistance distance  between them when unit resistors are placed on every edge of $G$ \cite{foster1949average}, denoted by $R_{ij}(G)$. The sum of resistance distances between all pairs of vertices in $G$ is called the Kirchhoff index \cite{chen2007resistance,chen2008resistance}, denoted by $Kf(G)$. In 1993,
Klein and Randi\'{c} \cite{Klein1993Resistance} gave the formula for the resistance distance of a connected graph $G$ by using the generalized inverse of Laplacian matrix of $G$.  The formulas for resistance distances and Kirchhoff indexes  of some graph classes were given \cite{yang2014resistance,sun2015some}.
And the resistance distance  is applied in  spanning trees  of graphs \cite{cheng2022counting,li2019enumeration}, random walks on graphs \cite{lovasz1993random,cloninger2024random} and network analysis \cite{stephenson1989rethinking,zhang2019detecting}, etc. %The resistance distance  is a distance function on graphs and is applied in the study of network centralities \cite{stephenson1989rethinking}, parameters estimations in random walks \cite{lovasz1993random}, and counting the spanning trees of  graphs \cite{chung2023forest}, etc.
%The (generalized) resistance distances of some weighted graphs have been studied, including  graphs with positive weights \cite{bapat2004resistance}, negative weights \cite{zelazo2014definiteness}   and  matrix weights \cite{mondal2023inversess,cloninger2024random}.
 The  resistance distances  of real number  weighted  graphs \cite{bapat2004resistance,zelazo2014definiteness}  and  matrix weighted graphs \cite{mondal2023inversess,cloninger2024random}  have been studied, respectively.

In this paper,  we define  a dual number weighted graph $G^{w}$ by weighting a dual number $1+\widehat{a}_{e}\varepsilon$ on each edge $e$ of a graph $G$, where $\widehat{a}_{e}$ is a real number.  Specifically, $-\widehat{a}_{e}$ can be regarded as the perturbation of a unit resistance  on the edge $e$ of $G$.
%we define a dual  graph, which is a graph  whose each edges is assigned  a dual number weight. The opposite number of the dual part of this dual number is viewed as the perturbation of the unit resistance  on each edge of the graph.
For a connected dual number weighted  graph $G^{w}$, we give  the existence and  expressions for the $\{1\}$-inverses  and Moore-Penrose inverse  of the  Laplacian matrix of $G^{w}$.And we present the   block  representations of the $\{1\}$-inverses  of  the Laplacian matrix of $G^{w}$.
Using these results, we give the explicit formulas for the resistance distance  and Kirchhoff index of  $G^{w}$. Furthermore, we obtain  the perturbation bounds of the Kirchhoff index of $G$ via the number of vertices,  algebraic connectivity and spectral radius,
   eigenvalues of the  Laplacian matrix and  perturbation Laplacian matrix of $G$. In particular, when the resistance perturbation is only on the edge $e=\{i,j\}$ of $G$, we give the perturbation bounds for the Kirchhoff index and the resistance distance between vertices  $i$ and $j$ of $G$ by the algebraic connectivity, largest eigenvalue and  number of spanning trees of $G$, etc.

\section{Preliminaries}
Let $\mathbb{R}$ denote the set of real numbers. Let $\varepsilon$ denote the dual unit satisfying $\varepsilon\neq0, \varepsilon^{2}=0.$
  A dual number is denoted as
$a=a_{s}+  a_{d}\varepsilon$, where $a_{s},a_{d}\in \mathbb{R}$ are  called the standard part and  infinitesimal part of  $a$, respectively \cite{clifford1871preliminary}. Denote the set of dual numbers by $\mathbb{D}$.
In the following, we define a dual number weighted  graph.
\begin{prou}
Let $G=(V,E)$ be a graph and   a weight function $w:E\rightarrow \mathbb{D}$  such that $w(e)=1+\widehat{a}_{e}\varepsilon$ for $e\in E$, where $\widehat{a}_{e}\in \mathbb{R}$. The weighted  graph $G^{w}=(V,E,w)$ is called the dual number weighted  graph  of $G$.
\end{prou}

For a  connected graph $G$,  assume  that the resistor with resistance $1-\widehat{a}_{e}\varepsilon$ is placed  on  each edge $e$   of $G$.  Then we can regard the  infinitesimal part  $-\widehat{a}_{e}$  as the perturbation of a unit resistance on the edge $e$. The weight $w(e)=\frac{1}{1-\widehat{a}_{e}\varepsilon}=1+\widehat{a}_{e}\varepsilon$
 of the edge $e$ is the conductance  on the edge $e$. Then we can regard  a dual number weighted  graph $G^{w}$ as a network with resistance  perturbations.
%The conductance (reciprocal of resistor) is unit on each edge of $G$. The dual part $w^{'}(e)$ of the weight $1+w^{'}(e)\varepsilon$ of the edge $e$ in $G^{w}$ can be regarded as the perturbation of the  conductance on $e$ of $G$. The resistor on the edge $e$ of $G^{w}$ is $\frac{1}{1+w^{'}(e)\varepsilon}=1-w^{'}(e)\varepsilon,$ then $-w^{'}(e)$ is the perturbation of the  resistor on $e$ of $G$.
% this way,  the connected dual graph $G^{w}$  represents the network whose edges have perturbations of resistors.  %The resistance distance  between vertices  $i$ and $j$ of $G^{w}$ is the effective resistance  between them when resistor $\Omega_{e}=1-w^{'}(e)\varepsilon$ is placed on each edge $e$ of $G^{w}$.

 In particular,
  we use  $G_{e}^{w}$ to denote the dual number weighted graph  with the resistance perturbation $-\widehat{a}_{e}$  only on the edge $e$ of a connected graph $G$.  Then the weight of the edge
$e$ of $G_{e}^{w}$ is  $1+\widehat{a}_{e}\varepsilon$ and the weights of all other edges are 1.

Let $\mathbb{R}^{m\times n}$ and $\mathbb{D}^{m\times n}$ denote the sets of $m\times n$  matrices  on $\mathbb{R}$ and $\mathbb{D}$, respectively.   We write $[n]=\{1,2,\cdots,n\}$.

  Let $G=(V,E)$ be a graph and $G^{w}$ be the dual number weighted graph corresponding to $G$.
  Let $A=(a_{ij})\in \mathbb{R}^{n\times n}$ be the adjacency matrix of   $G$.    The matrix  $\widehat{A}=(\widehat{a}_{ij})\in \mathbb{R}^{n\times n}$ is called  the perturbation adjacency matrix of $G$, where $\widehat{a}_{ij}=0$ if $\{i,j\}\not \in E$.   We call $A_{w}=A+\widehat{A}\varepsilon=(a_{ij}^{w})\in \mathbb{D}^{n\times n}$ the adjacency matrix  of $G^{w}$, where
 \begin{align}\nonumber
a_{ij}^{w}= \begin{cases}
1+ \widehat{a}_{ij}\varepsilon, & \{i,j\}\in E, \\
0, & \textrm{otherwise}.
\end{cases}
\end{align}
Let $D=diag(d_{1},d_{2},\cdots,d_{n})$ be the degree matrix of $G$, where $d_{i}$ is the degree of vertex $i$ of $G$, for $i\in[n]$. The matrix $\widehat{D}=diag(\widehat{d}_{1},\widehat{d}_{2},\cdots,\widehat{d}_{n})$ is called  the perturbation degree matrix of $G$, where $\widehat{d}_{i}=\sum\limits_{i\sim j}\widehat{a}_{ij}$,  for $i \in[n]$.
We call  $D_{w}=D+\widehat{D}\varepsilon=diag(d_{1}^w,d_{2}^w,\cdots,d_{n}^w)$  the degree matrix of  $G^{w}$, where $d_{i}^{w}=\sum\limits_{i\sim j}a_{ij}^{w}=d_{i}+\widehat{d}_{i}\varepsilon$, for $i \in[n]$.
 We call  $L_{w}=D_{w}-A_{w}$ and $Q_{w}=D_{w}+A_{w}$ the Laplacian matrix and signless Laplacian matrix of   $G^{w}$, respectively. Let $L=D-A$ be the Laplacian matrix of $G$. The matrix $\widehat{L}=\widehat{D}-\widehat{A}$ is called  the perturbation Laplacian matrix of $G$.  Then
\begin{align}\nonumber
L_{w}&=D_{w}-A_{w}=D+\widehat{D}\varepsilon-(A+\widehat{A}\varepsilon)\\ \nonumber
&=D-A+(\widehat{D}-\widehat{A})\varepsilon\\ \nonumber
&=L+\widehat{L}\varepsilon.
\end{align}

 Let $A,B\in \mathbb{D}^{n\times n}$. If $AB=BA=I$, then $A$ is invertible and  $B$ is called  the inverse of $A$,  denoted by $A^{-1}$, where $I$ is the $n\times n$  identity matrix  \cite{qi2023dualquater}.
For $A\in \mathbb{D}^{m\times n}$, if there exists $X\in \mathbb{D}^{n\times m}$ such that
\begin{align}\label{mpin}AXA=A, \ \ XAX=X, \ \ (AX)^{T}=AX, \ \  (XA)^{T}=XA,
\end{align}
 then $X$ is called the Moore-Penrose  inverse of $A$, denoted by $A^{\dag}$ \cite{pennestri2018moore}. When $A$ is a real matrix,  the Moore-Penrose inverse of  $A$ exists and is unique \cite{genbook2003}. For a dual matrix $A$, the Moore-Penrose inverse of $A$ may not exist, and    it is unique if it exists \cite{udwadia2021dual}. If there exists $X\in \mathbb{D}^{n\times m}$ satisfying  $AXA=A$ in  Equation \eqref{mpin}, then $X$ is called the  $\{1\}$-inverse of $A$, denoted by $A^{\{1\}}$ \cite{de2018generalized}. When $A$ is a real matrix, the $\{1\}$-inverse of $A$ exists \cite{genbook2003}. For a dual matrix $A$, the $\{1\}$-inverse of $A$ may not exist, and    may   not be  unique if it exists.  Denote the set of  $\{1\}$-inverse of $A$ by $A{\{1\}}$.  The existence and some  expressions of $\{1\}$-inverses,  Moore-Penrose inverses and inverses of dual matrices  were given in  \cite{udwadia2021dual,udwadia2021does,wang2021characterizations,qi2023dualquater}.
\begin{lem}\label{lem1}
Let $A=A_{s}+A_{d}\varepsilon\in \mathbb{D}^{m\times n}$, where $A_{s},A_{d}\in \mathbb{R}^{m\times n}$. Then the following results hold.

$(a)$ \cite{udwadia2021dual,udwadia2021does}  The  $\{1\}$-inverse of $A$ exists if and only if  the Moore-Penrose inverse of $A$ exists.

Suppose the  $\{1\}$-inverse of $A$ exists and $A^{\{1\}}$ is a   $\{1\}$-inverse of $A$. Then
\begin{align}\label{yuyul} A{\{1\}}=\{A^{\{1\}}AA^{\{1\}}+(I-A^{\{1\}}A)P+Q(I-AA^{\{1\}}): P,Q\in \mathbb{D}^{n\times m}\}.\end{align}

$(b)$ \cite{wang2021characterizations} The   Moore-Penrose  inverse of $A$ exists if and only if
$$(I-A_{s}A_{s}^{\dagger})A_{d}(I-A_{s}^{\dagger}A_{s})=0.$$

If the   Moore-Penrose  inverse of $A$ exists, then $A^{\dagger}=A_{s}^{\dagger}- R\varepsilon,$ where $$R=A^{\dagger}_{s}A_{d}A^{\dagger}_{s}-
(A_{s}^{T}A_{s})^{\dag}A_{d}^{T}(I-A_{s}A_{s}^{\dagger})-
(I-A_{s}^{\dagger}A_{s})A_{d}^{T}(A_{s}A_{s}^{T})^{\dag}.$$

$(c)$ \cite{qi2023dualquater} Suppose that $A$ is a square dual matrix.
Then $A$ is invertible if and only if $A_{s}$ is invertible.

If the square dual matrix  $A$ is invertible, then
$$A^{-1}=A_{s}^{-1}-A_{s}^{-1}A_{d}A_{s}^{-1}\varepsilon.$$
\end{lem}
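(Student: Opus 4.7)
The statement is a compilation of three established results on generalized inverses of dual matrices, each attributed to a specific reference. My plan is to sketch the key verifications for each part and defer to the cited sources for the full technical arguments.

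For part $(a)$, one direction is immediate: any Moore--Penrose inverse satisfies $AXA = A$ and is therefore a $\{1\}$-inverse. The converse, that the existence of any $\{1\}$-inverse implies the existence of the Moore--Penrose inverse, is the substantive content and I would simply invoke \cite{udwadia2021dual, udwadia2021does}. For the parametrization \eqref{yuyul}, I would first check the easy inclusion: if $Y = A^{\{1\}}AA^{\{1\}} + (I - A^{\{1\}}A)P + Q(I - AA^{\{1\}})$, then since $AA^{\{1\}}A = A$ we have $A(I - A^{\{1\}}A) = 0$ and $(I - AA^{\{1\}})A = 0$, hence $AYA = AA^{\{1\}}AA^{\{1\}}A = A$. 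For the reverse inclusion, given an arbitrary $\{1\}$-inverse $Y$, I would express $Y$ in the stated form by a concrete choice of $P$ and $Q$ built from $Y$, $A$, and the fixed reference inverse $A^{\{1\}}$.

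For part $(b)$, I would write $A^{\dagger} = X_{s} + X_{d}\varepsilon$ with $X_{s}, X_{d}\in\mathbb{R}^{n\times m}$ unknown, substitute into the four Moore--Penrose conditions \eqref{mpin}, and split each equation into standard and infinitesimal parts. The zeroth-order system forces $X_{s} = A_{s}^{\dagger}$. The first-order system is linear in $X_{d}$; its consistency condition is precisely $(I - A_{s}A_{s}^{\dagger})A_{d}(I - A_{s}^{\dagger}A_{s}) = 0$, and when this holds the solution yields the explicit expression for $R$ stated in the lemma, as in \cite{wang2021characterizations}. For part $(c)$, set $A^{-1} = B_{s} + B_{d}\varepsilon$, substitute into $AA^{-1} = A^{-1}A = I$, and expand in $\varepsilon$. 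The standard part gives $A_{s}B_{s} = B_{s}A_{s} = I$, so $A_{s}$ must be invertible with $B_{s} = A_{s}^{-1}$; the infinitesimal part gives $A_{s}B_{d} + A_{d}B_{s} = 0$, from which $B_{d} = -A_{s}^{-1}A_{d}A_{s}^{-1}$. Conversely, if $A_{s}$ is invertible, one checks directly that $A_{s}^{-1} - A_{s}^{-1}A_{d}A_{s}^{-1}\varepsilon$ is a two-sided dual inverse of $A$, recovering \cite{qi2023dualquater}.

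The main point of care is the reverse inclusion in part $(a)$: one must exhibit $P$ and $Q$ realizing an arbitrary $\{1\}$-inverse $Y$. A natural attempt is to set $P = Y$ and then choose $Q$ to absorb the residue $Y - A^{\{1\}}AA^{\{1\}} - (I - A^{\{1\}}A)Y$; showing that this residue can indeed be written as $Q(I - AA^{\{1\}})$ reduces, via another application of $AA^{\{1\}}A = A$, to a routine dual-algebra identity. All the remaining verifications are short expansions in powers of $\varepsilon$.
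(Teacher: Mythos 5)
The paper offers no proof of this lemma at all: it is stated purely as a compilation of results imported from the cited references, which is exactly how you treat it. Your verification sketches are correct as far as they go — in particular, for the reverse inclusion in part $(a)$ the residue $A^{\{1\}}AY-A^{\{1\}}AA^{\{1\}}$ equals $A^{\{1\}}AY(I-AA^{\{1\}})$ by $AYA=A$, so $P=Y$, $Q=A^{\{1\}}AY$ works — and there is nothing further to compare against.
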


Let $\mathbb{D}^{n}$ denote the set of $n$-dimensional vectors on
$\mathbb{D}$,  $A\in\mathbb{D}^{m\times n}$ and $b\in\mathbb{D}^{m}$.   Assume that the $\{1\}$-inverse of $A $ exists. For  the dual system  $Ax=b $,  the existence of its solution  and the expression of its general solution were given by using the $\{1\}$-inverse of $A $ \cite{udwadia2021dual}.
\begin{lem}\label{eth}
 Let $A\in\mathbb{D}^{m\times n}$ and $b\in\mathbb{D}^{m}$. Suppose the $\{1\}$-inverse of $A$ exists and $A^{\{1\}}$ is a $\{1\}$-inverse of $A$.   Then the following results hold.

$(a)$  $AA^{\{1\}}b=b$ if and only if  $AXb=b$ for arbitrary $X\in A\{1\}$.

$(b)$ \cite{udwadia2021dual} The solution of the dual equation
\begin{align}\nonumber Ax=b  \end{align} exists if and only if
$AA^{\{1\}}b=b.$ If the solution of   the dual equation  $Ax=b$ exists, then the general solution is \begin{align}\label{eq91} x=A^{\{1\}}b+(I-A^{\{1\}}A)u,\end{align} where $u\in \mathbb{D}^{n}$.
 And every solution to the dual equation  $Ax=b$ can be written in the form of \eqref{eq91}.
\end{lem}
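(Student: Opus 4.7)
The final statement is Lemma~\ref{eth}. Part $(b)$ is attributed to \cite{udwadia2021dual}, so I will take it as given and focus on part $(a)$. The role of $(a)$ is to guarantee that the consistency test ``$AA^{\{1\}}b=b$'' does not depend on which $\{1\}$-inverse of $A$ one happens to choose, a point that matters here because $\{1\}$-inverses of dual matrices need not be unique; without $(a)$, the statement and hypothesis of $(b)$ would be ambiguous.

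The $(\Leftarrow)$ direction of $(a)$ is immediate: specializing the hypothesis ``$AXb=b$ for arbitrary $X\in A\{1\}$'' to $X=A^{\{1\}}$, which is itself an element of $A\{1\}$, yields $AA^{\{1\}}b=b$.

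For $(\Rightarrow)$ the plan is to invoke the parametric description of $A\{1\}$ supplied by \eqref{yuyul}. An arbitrary $X\in A\{1\}$ has the form
$$X \;=\; A^{\{1\}}AA^{\{1\}} \;+\; (I-A^{\{1\}}A)P \;+\; Q(I-AA^{\{1\}})$$
with $P,Q\in\mathbb{D}^{n\times m}$. Left-multiplying by $A$ and right-multiplying by $b$ splits $AXb$ into three blocks. The $P$-block vanishes via the defining identity $AA^{\{1\}}A=A$, which makes $A(I-A^{\{1\}}A)=0$; the $Q$-block vanishes via the hypothesis $AA^{\{1\}}b=b$, which makes $(I-AA^{\{1\}})b=0$; and the first block collapses to $AA^{\{1\}}b=b$. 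Hence $AXb=b$ for every admissible $P,Q$, as required.

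I do not anticipate a genuine obstacle. The whole argument is a distributive computation in the matrix ring over $\mathbb{D}$ using only $AA^{\{1\}}A=A$ and the hypothesis on $b$---no inversion or cancellation---so the presence of zero divisors in $\mathbb{D}$ (in particular $\varepsilon^{2}=0$) plays no role. The only point worth a sanity check is that \eqref{yuyul} genuinely parametrises \emph{every} element of $A\{1\}$, not merely a subset; but this is exactly what Lemma~\ref{lem1}(a) delivers once a single $\{1\}$-inverse is known to exist, which is the standing assumption of Lemma~\ref{eth}.
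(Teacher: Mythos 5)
Your proof is correct, but it takes a genuinely different (and heavier) route than the paper for the $(\Rightarrow)$ direction of $(a)$. You expand an arbitrary $X\in A\{1\}$ via the parametrization \eqref{yuyul} as $A^{\{1\}}AA^{\{1\}}+(I-A^{\{1\}}A)P+Q(I-AA^{\{1\}})$ and kill the $P$-term with $AA^{\{1\}}A=A$ and the $Q$-term with the hypothesis $(I-AA^{\{1\}})b=0$; this works, but it depends on the fact that \eqref{yuyul} exhausts \emph{all} of $A\{1\}$ (which, as you note, Lemma~\ref{lem1}$(a)$ does supply). The paper instead uses only the defining identity of a $\{1\}$-inverse: for any $X\in A\{1\}$ one has $AXA=A$, hence
\begin{align}\nonumber
b=AA^{\{1\}}b=(AXA)A^{\{1\}}b=AX\bigl(AA^{\{1\}}b\bigr)=AXb,
\end{align}
a one-line computation that needs no parametrization at all. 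What your approach buys is a concrete illustration of how the family \eqref{yuyul} interacts with consistency of $Ax=b$; what the paper's approach buys is economy and independence from the completeness of that parametrization. The $(\Leftarrow)$ direction and the treatment of $(b)$ by citation match the paper exactly.
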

\begin{proof}
$(a)$  If $b=AA^{\{1\}}b$, then for any $X\in A\{1\}$ we have $$b=(AXA)A^{\{1\}}b=AX(AA^{\{1\}}b)=AXb.$$

Conversely,  it is clear.

$(b)$ See Result 9 and Result 10 of \cite{udwadia2021dual}.
\end{proof}

\section{Main result}
 In the following, we use $\textbf{1}$,$J$ and $\textbf{1}_{i}$ to denote  the all-ones vector of appropriate dimension, the all-ones square  matrix of  appropriate order and the vector of $n$-dimension whose $i$-th component is 1 and other components are zero, respectively.
\subsection{Generalized inverses of Laplacian matrices of connected dual number weighted  graphs}
In this Section, for a connected dual number weighted  graph $G^{w}$, some  formulas for generalized inverses of the  Laplacian matrices of $G^{w}$ are given. These formulas are used to  derive the formulas for the resistance distance and Kirchhoff index  of  $G^{w}$ in Section 3.2.

 The existence and expressions for the  $\{1\}$-inverse and  Moore-Penrose  inverse of the   Laplacian matrix of  a connected dual number weighted  graph $G^{w}$ are given as follows.
\begin{lem}\label{lem2.1}

Let $L_{w}=L+\widehat{L}\varepsilon$ be the  Laplacian matrix of a connected dual number weighted  graph $G^{w}$ with $n$ vertices, \
 where $L$ and $\widehat{L}$ are the Laplacian matrix and perturbation Laplacian matrix of the connected graph $G$, respectively.  Then the following results hold.

$(a)$ The $\{1\}$-inverse and  Moore-Penrose  inverse of $L_{w}$ exist.

$(b)$ Let $L^{\{1\}}$ be a $\{1\}$-inverse of $L$. Then
\begin{align}\label{nb1}L^{\{1\}}- L^{\{1\}}\widehat{L}L^{\{1\}}\varepsilon\end{align}
is a  $\{1\}$-inverse of $L_{w}$.

$(c)$
\begin{align}
\label{zr}
L_{w}^{\dag}&=L^{\dag}- L^{\dag}\widehat{L}L^{\dag}\varepsilon, \\ \label{kpll}
I-L_{w}L_{w}^{\dag}&=I-L_{w}^{\dag}L_{w}=\frac{1}{n}J,\\ \label{nb2} L_{w}^{\dag}&=(L_{w}+\frac{1}{n}J)^{-1}-\frac{1}{n}J.\end{align}
\end{lem}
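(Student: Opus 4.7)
\bigskip

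\textbf{Proof proposal.} The heart of the matter is the structural fact that $\widehat{L}=\widehat{D}-\widehat{A}$ is a real symmetric matrix with zero row sums, so $\widehat{L}\mathbf{1}=0$ and $\mathbf{1}^{T}\widehat{L}=0$, which I will record as $J\widehat{L}=\widehat{L}J=0$. Combined with the standard facts for a connected graph $G$ that $L\mathbf{1}=0$, $\operatorname{rank}L=n-1$, $I-LL^{\dag}=I-L^{\dag}L=\frac{1}{n}J$, and $(L+\frac{1}{n}J)^{-1}=L^{\dag}+\frac{1}{n}J$, this is essentially all the ammunition the whole lemma requires.

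For $(a)$, I would invoke the existence criterion in Lemma \ref{lem1}$(b)$: the Moore--Penrose inverse of $L_{w}$ exists if and only if $(I-LL^{\dag})\widehat{L}(I-L^{\dag}L)=0$, which here reads $\frac{1}{n^{2}}J\widehat{L}J$ and vanishes. Lemma \ref{lem1}$(a)$ then also yields existence of the $\{1\}$-inverse.

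For $(b)$, I would verify $L_{w}XL_{w}=L_{w}$ for $X=L^{\{1\}}-L^{\{1\}}\widehat{L}L^{\{1\}}\varepsilon$ by expansion in $\varepsilon$. The standard part reduces to $LL^{\{1\}}L=L$, and after cancellation the $\varepsilon$-coefficient becomes $LL^{\{1\}}\widehat{L}+\widehat{L}L^{\{1\}}L-LL^{\{1\}}\widehat{L}L^{\{1\}}L$. This collapses to $\widehat{L}$ once two pull-through identities are in place: $LL^{\{1\}}\widehat{L}=\widehat{L}$, because each column of $\widehat{L}$ lies in the column space of $L$ (the orthogonal complement of $\ker L=\mathrm{span}(\mathbf{1})$, thanks to $\mathbf{1}^{T}\widehat{L}=0$); and $\widehat{L}L^{\{1\}}L=\widehat{L}$, because $I-L^{\{1\}}L$ maps into $\ker L\subseteq\ker\widehat{L}$.

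For $(c)$, substituting $A_{s}=L=L^{T}$ and $A_{d}=\widehat{L}=\widehat{L}^{T}$ into the explicit formula of Lemma \ref{lem1}$(b)$ and using $I-LL^{\dag}=\frac{1}{n}J$ together with $J\widehat{L}=\widehat{L}J=0$ kills the two ``correction'' terms in $R$, leaving $R=L^{\dag}\widehat{L}L^{\dag}$ and establishing \eqref{zr}. Multiplying out $L_{w}L_{w}^{\dag}$ with this formula and again invoking $J\widehat{L}=0$ gives $L_{w}L_{w}^{\dag}=LL^{\dag}=I-\frac{1}{n}J$, and symmetrically for $L_{w}^{\dag}L_{w}$, which is \eqref{kpll}. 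Finally, $L+\frac{1}{n}J$ is invertible, so $L_{w}+\frac{1}{n}J=L+\frac{1}{n}J+\widehat{L}\varepsilon$ is invertible by Lemma \ref{lem1}$(c)$; applying that inversion formula, using $(L+\frac{1}{n}J)^{-1}=L^{\dag}+\frac{1}{n}J$, and once more cancelling via $J\widehat{L}=\widehat{L}J=0$ produces \eqref{nb2}. The only real risk is a slip in the expansion in $(b)$, but once the two pull-through identities are secured the rest is routine bookkeeping.
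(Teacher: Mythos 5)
Your proposal is correct; all the identities you rely on hold, and the overall skeleton (existence via the criterion of Lemma \ref{lem1}$(b)$, direct verification for the $\{1\}$-inverse, and the explicit Moore--Penrose formula for part $(c)$) matches the paper's. The differences are in how the key cancellations are justified. For $(b)$, the paper proves only the single identity $\widehat{L}L^{\{1\}}L=\widehat{L}$, and does so by writing $L=P\,\mathrm{diag}(L_{11},0)\,P^{T}$ with $P=\left[\begin{smallmatrix}I&0\\-\mathbf{1}^{T}&1\end{smallmatrix}\right]$ and using the general block form of $\{1\}$-inverses of such matrices; it then orders the triple product so that this one identity suffices. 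You instead establish both pull-through identities $LL^{\{1\}}\widehat{L}=\widehat{L}$ and $\widehat{L}L^{\{1\}}L=\widehat{L}$ by a range/kernel argument ($\mathrm{col}(\widehat{L})\subseteq\mathbf{1}^{\perp}=\mathrm{col}(L)$, and $\mathrm{ran}(I-L^{\{1\}}L)\subseteq\ker L=\mathrm{span}(\mathbf{1})\subseteq\ker\widehat{L}$), which is cleaner and more conceptual, at the cost of needing two identities in the $\varepsilon$-coefficient $\widehat{L}L^{\{1\}}L+LL^{\{1\}}\widehat{L}-LL^{\{1\}}\widehat{L}L^{\{1\}}L$. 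For \eqref{nb2}, the paper directly verifies $(L_{w}+\frac{1}{n}J)(L_{w}^{\dag}+\frac{1}{n}J)=I$, whereas you invoke the dual inversion formula of Lemma \ref{lem1}$(c)$ together with the real identity $(L+\frac{1}{n}J)^{-1}=L^{\dag}+\frac{1}{n}J$; both routes are fine, and yours has the small advantage of producing $(L_{w}+\frac{1}{n}J)^{-1}=L_{w}^{\dag}+\frac{1}{n}J$ explicitly. The only presentational gap is that you take $I-LL^{\dag}=I-L^{\dag}L=\frac{1}{n}J$ and $(L+\frac{1}{n}J)^{-1}=L^{\dag}+\frac{1}{n}J$ as known, while the paper derives the former from the spectral decomposition of $L$; these are indeed standard for connected graphs, but in a self-contained write-up you should at least cite or sketch them as the paper does.
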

\begin{proof}
We first prove %\red{that the Laplacian matrix $L$ of a connected graph $G$ satisfies}
\begin{align}\label{II}
I-L^{\dag}L=\frac{1}{n}J \ \ \textrm{and} \ \ I-LL^{\dag}=\frac{1}{n}J.\end{align}
 It is well known that
$L$ has exactly  one zero eigenvalue and all other eigenvalues are positive \cite{bapat2010graphs}.  Let $\lambda_{1}\geq\lambda_{2}\geq\cdots\geq\lambda_{n-1}>\lambda_{n}=0$ be the eigenvalues of  $L$.  Since
$L$ is a real symmetric matrix, we assume that
 $p_{i}$ is the unit eigenvector corresponding to the eigenvalue $\lambda_{i}$  and $p_{i}$ is orthogonal to $p_{j}$, for $i,j\in[n]$, $i\neq j$.
Since  the row sums of $L$ are zero, we can choose $p_{n}=\frac{1}{\sqrt{n}}\textbf{1}$.
  Then \begin{align}\nonumber
  L=\sum_{i=1}^{n}\lambda_{i}p_{i}p_{i}^{T}=\sum_{i=1}^{n-1}\lambda_{i}p_{i}p_{i}^{T}.\end{align}
  Therefore, we have
   \begin{align} \label{siii} L^{\dag}=\sum_{i=1}^{n-1}\frac{1}{\lambda_{i}}p_{i}p_{i}^{T}\end{align} and
   \begin{align}\nonumber I-L^{\dag}L&=I-\sum_{i=1}^{n-1}\frac{1}{\lambda_{i}}p_{i}p_{i}^{T}\sum_{i=1}^{n-1}\lambda_{i}p_{i}p_{i}^{T}=
I-\sum_{i=1}^{n-1}p_{i}p_{i}^{T}\\ \nonumber
&=\sum_{i=1}^{n}p_{i}p_{i}^{T}-\sum_{i=1}^{n-1}p_{i}p_{i}^{T}=p_{n}p_{n}^{T}=
\frac{1}{\sqrt{n}\sqrt{n}}\textbf{1}\textbf{1}^{T}=\frac{1}{n}J.\end{align}
Similarly, we have $I-LL^{\dag}=\frac{1}{n}J.$

 $(a)$ Since the row and column sums of $\widehat{L}$ are zero, from  Equation  \eqref{II} we have $$\widehat{L}(I-L^{\dag}L)=\widehat{L}(\frac{1}{n}J)=0 \ \ \textrm{and} \ \  (I-LL^{\dag})\widehat{L}=(\frac{1}{n}J)\widehat{L}=0.$$
From  Lemma \ref{lem1},  we know  that the Moore-Penrose  inverse of $L_{w}$ exists, which implies the $\{1\}$-inverse of $L_{w}$ exists.

$(b)$ Let $X=L^{\{1\}}- L^{\{1\}}\widehat{L}L^{\{1\}}\varepsilon$. Next, we prove that  $X$ is a $\{1\}$-inverse of $L_{w}$, i.e. $L_{w}XL_{w}=L_{w}$. Let $L=\left[\begin{array}{cc}
L_{11} & L_{12} \\
L_{12}^{T} & L_{22}
\end{array}
\right]$, where $L_{11}\in\mathbb{R}^{(n-1)\times (n-1)}$. Since the row and column sums of $L$ are 0, we have
$L_{11}\textbf{1}+L_{12}=0$ and $\textbf{1}^{T}L_{12}+L_{22}=0$, which implies that $L_{12}=-L_{11}\textbf{1}$ and $L_{22}=-\textbf{1}^{T}L_{12}=\textbf{1}^{T}L_{11}\textbf{1}.$ Then
 \begin{align}\label{rru}L=\left[\begin{array}{cc}
L_{11} & -L_{11}\textbf{1} \\
-\textbf{1}^{T}L_{11} & \textbf{1}^{T}L_{11}\textbf{1}
\end{array}
\right]=P\left[\begin{array}{cc}
L_{11} & 0 \\
0 & 0
\end{array}
\right]P^{T},\end{align}
where $P=\left[\begin{array}{cc}
I & 0 \\
-\textbf{1}^{T} & 1
\end{array}
\right].$ Similarly, $\widehat{L}=\left[\begin{array}{cc}
\widehat{L}_{11} & -\widehat{L}_{11}\textbf{1} \\
-\textbf{1}^{T}\widehat{L}_{11} & \textbf{1}^{T}\widehat{L}_{11}\textbf{1}
\end{array}
\right]=P\left[\begin{array}{cc}
\widehat{L}_{11} & 0\\
0&0
\end{array}
\right]P^{T},$ where $\widehat{L}_{11}\in\mathbb{R}^{(n-1)\times (n-1)}$. Using  Equation   \eqref{rru}, it follows that any $\{1\}$-inverse of $L$ is $$L^{\{1\}}=(P^{T})^{-1}\left[\begin{array}{cc}
L_{11}^{-1} & x \\
y^{T} & z
\end{array}
\right]P^{-1},$$
where $x,y\in \mathbb{R}^{n-1}$ and $z\in \mathbb{R}$ (see \cite{genbook2003}).
Then \begin{align}\nonumber \widehat{L}(I-L^{\{1\}}L)&=P\left[\begin{array}{cc}
\widehat{L}_{11} & 0\\
0&0
\end{array}
\right]P^{T}(I-(P^{T})^{-1}\left[\begin{array}{cc}
L_{11}^{-1} & x \\
y^{T} & z
\end{array}
\right]P^{-1}P\left[\begin{array}{cc}
L_{11} & 0 \\
0 & 0
\end{array}
\right]P^{T})\\ \nonumber
&=P\left[\begin{array}{cc}
\widehat{L}_{11} & 0\\
0&0
\end{array}
\right]P^{T}(I-(P^{T})^{-1}\left[\begin{array}{cc}
I & 0 \\
y^{T}L_{11} & 0
\end{array}
\right]P^{T})\\ \nonumber
&=P\left[\begin{array}{cc}
\widehat{L}_{11} & 0\\
0&0
\end{array}
\right](I-\left[\begin{array}{cc}
I & 0 \\
y^{T}L_{11} & 0
\end{array}
\right])P^{T}\\ \nonumber
&=P\left[\begin{array}{cc}
\widehat{L}_{11} & 0\\
0&0
\end{array}
\right]\left[\begin{array}{cc}
0 & 0 \\
-y^{T}L_{11} & 1
\end{array}
\right]P^{T}=0,
\end{align}

%Since $L^{\dag}$ is a $\{1\}$-inverse of $L$,  from \citep[ Corollary 1 of Chapter 2]{genbook2003} it follows that   any $\{1\}$-inverse of $L$ is $$L^{\{1\}}=L^{\dag}+Z-L^{\dag}LZLL^{\dag},$$
 %where $Z\in\mathbb{R}^{n\times n}$ are arbitrary. Then \begin{align}\nonumber
%\widehat{L}(I-L^{\{1\}}L)&=\widehat{L}(I-(L^{\dag}+Z-L^{\dag}LZLL^{\dag})L)
%=\widehat{L}(I-(L^{\dag}L+ZL-L^{\dag}LZL))\\ \nonumber
%&=\widehat{L}(I-L^{\dag}L-(I-L^{\dag}L)ZL)=\widehat{L}(\frac{1}{n}J)-\widehat{L}(\frac{1}{n}J)ZL=0,
%\end{align}
i.e. $\widehat{L}=\widehat{L}L^{\{1\}}L.$ So \begin{align}\nonumber
L_{w}XL_{w}&=(L+\widehat{L}\varepsilon)(L^{\{1\}}- L^{\{1\}}\widehat{L}L^{\{1\}}\varepsilon)(L+\widehat{L}\varepsilon)\\ \nonumber
&=(L+\widehat{L}\varepsilon)(L^{\{1\}}L+L^{\{1\}}\widehat{L}\varepsilon-L^{\{1\}}\widehat{L}L^{\{1\}}L\varepsilon)\\ \nonumber
&=(L+\widehat{L}\varepsilon)(L^{\{1\}}L+L^{\{1\}}\widehat{L}\varepsilon-L^{\{1\}}\widehat{L}\varepsilon)\\ \nonumber
&=(L+\widehat{L}\varepsilon)L^{\{1\}}L=L+\widehat{L}\varepsilon=L_{w}.
\end{align}
Hence, $X=L^{\{1\}}- L^{\{1\}}\widehat{L}L^{\{1\}}\varepsilon$ is a  $\{1\}$-inverse of $L_{w}$.

$(c)$ We   prove $L_{w}^{\dag}=L^{\dag}- L^{\dag}\widehat{L}L^{\dag}\varepsilon$ in the following. From Lemma \ref{lem1}$(b)$, we know that  $$L_{w}^{\dag}=L^{\dag}-(L^{\dagger}\widehat{L}L^{\dagger}-(L^{T}L)^{\dag}
\widehat{L}^{T}(I-LL^{\dag})-
(I-L^{\dag}L)\widehat{L}^{T}(LL^{T})^{\dag})\varepsilon.$$
Since
$\widehat{L}^{T}(I-LL^{\dag})=\widehat{L}(I-LL^{\dag})=\widehat{L}(\frac{1}{n}J)=0$ and $(I-L^{\dag}L)\widehat{L}^{T}=(I-L^{\dag}L)\widehat{L}=\frac{1}{n}J\widehat{L}=0$. Then
\begin{align}  \nonumber
L_{w}^{\dag}
=L^{\dag}- L^{\dag}\widehat{L}L^{\dag}\varepsilon.
\end{align}
 Therefore,
 \begin{align}\nonumber
I-L_{w}^{\dag}L_{w}&=I-
(L^{\dag}-L^{\dag}\widehat{L}L^{\dag}\varepsilon)(L+\widehat{L}\varepsilon)=I-
L^{\dag}(I-\widehat{L}L^{\dag}\varepsilon)(L+\widehat{L}\varepsilon)\\ \nonumber
&=I-L^{\dag}(L+\widehat{L}\varepsilon-\widehat{L}L^{\dag}L\varepsilon)
=I-L^{\dag}(L+\widehat{L}(I-L^{\dag}L)\varepsilon)\\ \nonumber
&=I-L^{\dag}L=\frac{1}{n}J.
\end{align}
  Similarly,
 $I-L_{w}L_{w}^{\dag}=\frac{1}{n}J.$

Next, we   prove $L_{w}^{\dag}=(L_{w}+\frac{1}{n}J)^{-1}-\frac{1}{n}J$.  From $I-L_{w}^{\dag}L_{w}=I-L_{w}L_{w}^{\dag}=\frac{1}{n}J,$ we have $L_{w}^{\dag}L_{w}=L_{w}L_{w}^{\dag}.$ Since \begin{align}\nonumber(L_{w}+\frac{1}{n}J)(L_{w}^{\dag}+\frac{1}{n}J)
&=L_{w}L_{w}^{\dag}+\frac{1}{n}L_{w}J+\frac{1}{n}JL_{w}^{\dag}+
\frac{1}{n^{2}}J^{2}\\ \nonumber
&=L_{w}L_{w}^{\dag}+\frac{1}{n}L_{w}J+\frac{1}{n}JL_{w}^{\dag}L_{w}L_{w}^{\dag}+
\frac{1}{n^{2}}J^{2}\\
\nonumber
&=L_{w}L_{w}^{\dag}+\frac{1}{n}L_{w}J+\frac{1}{n}JL_{w}(L_{w}^{\dag})^{2}+
\frac{1}{n^{2}}J^{2}\\  \nonumber
&=L_{w}^{\dag}L_{w}+\frac{1}{n}J=I-\frac{1}{n}J+\frac{1}{n}J=I.
\end{align}
Then $L_{w}+\frac{1}{n}J$ is invertible and $L_{w}^{\dag}=(L_{w}+\frac{1}{n}J)^{-1}-\frac{1}{n}J$.
 \end{proof}

Next,  we give block representations of a
 $\{1\}$-inverse and the set of all $\{1\}$-inverses
of the  Laplacian  matrix of a connected dual number weighted  graph.
\begin{thm}\label{thn3.1}Let the  Laplacian matrix of a connected dual number weighted  graph $G^{w}$ with $n$ vertices be partitioned as  \begin{align}\label{huip}
L_{w}=\left[\begin{array}{cc}
\widetilde{L}_{11} & \widetilde{L}_{12} \\
\widetilde{L}_{12}^{T} & \widetilde{L}_{22}
\end{array}
\right],\end{align}  where $\widetilde{L}_{11}\in \mathbb{D}^{(n-1)\times(n-1)}$. Then \begin{align}\nonumber \left[\begin{array}{cc}
\widetilde{L}_{11}^{-1}
& 0\\
0 & 0
\end{array}
\right]\end{align} is a  ${\{1\}}$-inverse of $L_{w}$   and
\begin{align}\nonumber L_{w}{\{1\}}=&\bigg\{\left[\begin{array}{cc}
\widetilde{L}_{11}^{-1}
& 0\\
0 & 0
\end{array}
\right]+\left[\begin{array}{cc}
0
& \textbf{1} \\
0 & 1
\end{array}
\right]P+Q\left[\begin{array}{cc}
0
& 0 \\
\textbf{1}^{T} & 1
\end{array}
\right]:P,Q\in \mathbb{D}^{n\times n}\bigg\}.\end{align}
\end{thm}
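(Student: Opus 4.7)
The plan is to exploit the row-and-column-sum-zero structure of $L_{w}$ (the same observation that powered Lemma~\ref{lem2.1}) together with the universal formula \eqref{yuyul} from Lemma~\ref{lem1}.

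First, I would verify that the leading principal block $\widetilde{L}_{11}$ is invertible in $\mathbb{D}^{(n-1)\times(n-1)}$. Writing $\widetilde{L}_{11}=L_{11}+\widehat{L}_{11}\varepsilon$, where $L_{11}$ is the corresponding $(n-1)\times(n-1)$ principal block of the real Laplacian $L$, Lemma~\ref{lem1}(c) reduces invertibility of $\widetilde{L}_{11}$ to the classical fact (matrix-tree theorem) that any $(n-1)\times(n-1)$ principal submatrix of the Laplacian of a connected graph is nonsingular. Hence $\widetilde{L}_{11}^{-1}$ exists in $\mathbb{D}^{(n-1)\times(n-1)}$.

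Next, using the fact that $L_{w}$ has zero row and column sums (inherited from $L$ and $\widehat{L}$), I would rewrite the block decomposition \eqref{huip} as
$$L_{w}=\left[\begin{array}{cc}\widetilde{L}_{11} & -\widetilde{L}_{11}\textbf{1} \\ -\textbf{1}^{T}\widetilde{L}_{11} & \textbf{1}^{T}\widetilde{L}_{11}\textbf{1}\end{array}\right],$$
in exact analogy with \eqref{rru}. Setting $X=\left[\begin{array}{cc}\widetilde{L}_{11}^{-1} & 0 \\ 0 & 0\end{array}\right]$, a direct block multiplication yields
$$L_{w}X=\left[\begin{array}{cc} I & 0 \\ -\textbf{1}^{T} & 0\end{array}\right],\qquad XL_{w}=\left[\begin{array}{cc} I & -\textbf{1} \\ 0 & 0\end{array}\right],$$
and hence $L_{w}XL_{w}=L_{w}$, confirming that $X$ is a $\{1\}$-inverse of $L_{w}$.

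Finally, to obtain the full set $L_{w}\{1\}$ I would invoke \eqref{yuyul}: once a single $\{1\}$-inverse is in hand, every $\{1\}$-inverse has the form $XL_{w}X+(I-XL_{w})P+Q(I-L_{w}X)$ with $P,Q\in\mathbb{D}^{n\times n}$. A short calculation from the formulas above gives $XL_{w}X=X$,
$$I-XL_{w}=\left[\begin{array}{cc}0 & \textbf{1} \\ 0 & 1\end{array}\right],\qquad I-L_{w}X=\left[\begin{array}{cc}0 & 0 \\ \textbf{1}^{T} & 1\end{array}\right],$$
which, substituted into the template, reproduces exactly the parametrization claimed. The main delicacy I anticipate is keeping the block algebra clean: by organizing everything around the factored form of $L_{w}$, the dual part $\widehat{L}_{11}\varepsilon$ never has to be tracked separately, and once $L_{w}XL_{w}=L_{w}$ is verified the remainder reduces mechanically to substitution into \eqref{yuyul}.
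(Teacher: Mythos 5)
Your proposal is correct and follows essentially the same route as the paper: invertibility of $\widetilde{L}_{11}$ via Lemma~\ref{lem1}(c) and the nonsingularity of $L_{11}$, verification of $L_{w}XL_{w}=L_{w}$ using the zero row/column sums (the paper checks the lower-right block $\widetilde{L}_{12}^{T}\widetilde{L}_{11}^{-1}\widetilde{L}_{12}=\widetilde{L}_{22}$ where you factor through $L_{w}X$ and $XL_{w}$, a cosmetic difference), and then substitution into \eqref{yuyul}. No gaps.
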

\begin{proof}
Let $
\widetilde{L}_{11}=L_{11}+\widehat{L}_{11}\varepsilon,
$
where $L_{11},\widehat{L}_{11}\in\mathbb{R}^{(n-1)\times(n-1)}$. Since $G$ is connected, the principal submatrix $L_{11}$ of the Laplacian matrix of $G$ is invertible \citep[Lemma 10.1]{bapat2010graphs}. From Lemma \ref{lem1}$(c)$, it follows that  $\widetilde{L}_{11}$ is invertible.  Next, we  prove that $X=\left[\begin{array}{cc}
\widetilde{L}_{11}^{-1}& 0 \\
0 & 0
\end{array}
\right]$ is a   $\{1\}$-inverse of $L_{w}$. Using  Equation   \eqref{huip}, we have \begin{align}\nonumber
L_{w}XL_{w}
&=\left[\begin{array}{cc}
\widetilde{L}_{11} & \widetilde{L}_{12} \\
\widetilde{L}_{12}^{T} & \widetilde{L}_{22}
\end{array}
\right]\left[\begin{array}{cc}
\widetilde{L}_{11}^{-1}& 0 \\
0 & 0
\end{array}
\right]\left[\begin{array}{cc}
\widetilde{L}_{11} & \widetilde{L}_{12} \\
\widetilde{L}_{12}^{T} & \widetilde{L}_{22}
\end{array}
\right]\\
&=\left[\begin{array}{cc}
\widetilde{L}_{11} & \widetilde{L}_{12} \\ \nonumber
\widetilde{L}_{12}^{T} & \widetilde{L}_{12}^{T}\widetilde{L}_{11}^{-1}\widetilde{L}_{12}
\end{array}
\right].
\end{align}
Since the row sums of $L_{w}$ are  zero,  it follows that
\begin{align}\nonumber
&\widetilde{L}_{11}\textbf{1}+\widetilde{L}_{12}=0 \ \ \textrm{and} \ \  \widetilde{L}_{12}^{T}\textbf{1}+\widetilde{L}_{22}=0.
\end{align}
Then $$\widetilde{L}_{12}^{T}\widetilde{L}_{11}^{-1}\widetilde{L}_{12}
=\widetilde{L}_{12}^{T}\widetilde{L}_{11}^{-1}(-\widetilde{L}_{11}\textbf{1})
=-\widetilde{L}_{12}^{T}\textbf{1}=\widetilde{L}_{22}.$$
Hence, $L_{w}XL_{w}=L_{w}$, i.e. $X$ is a $\{1\}$-inverse of $L_{w}$. From  Equation  \eqref{yuyul},
it follows that
\begin{align}\nonumber L_{w}{\{1\}}
=&\{XL_{w}X+(I-XL_{w})P+Q(I-L_{w}X): P,Q\in \mathbb{D}^{n\times n}\}\\ \nonumber
=&\bigg\{\left[\begin{array}{cc}
\widetilde{L}_{11}^{-1}
& 0\\
0 & 0
\end{array}
\right]+\left[\begin{array}{cc}
0
& -\widetilde{L}_{11}^{-1}L_{12} \\
0 & 1
\end{array}
\right]P+Q\left[\begin{array}{cc}
0
& 0 \\
-\widetilde{L}_{12}^{T}\widetilde{L}_{11}^{-1} & 1
\end{array}
\right]\\ \nonumber
&:P,Q\in \mathbb{D}^{n\times n}
\bigg\}.\end{align}
Since $\widetilde{L}_{11}^{-1}\widetilde{L}_{12}=\widetilde{L}_{11}^{-1}(-\widetilde{L}_{11}\textbf{1})=-\textbf{1}$ and $\widetilde{L}_{12}^{T}\widetilde{L}_{11}^{-1}=(-\textbf{1}^{T}\widetilde{L}_{11})\widetilde{L}_{11}^{-1}=-\textbf{1}^{T},$ we have
\begin{align}\nonumber L_{w}{\{1\}}=&\bigg\{\left[\begin{array}{cc}
\widetilde{L}_{11}^{-1}
& 0\\
0 & 0
\end{array}
\right]+\left[\begin{array}{cc}
0
& \textbf{1} \\
0 & 1
\end{array}
\right]P+Q\left[\begin{array}{cc}
0
& 0 \\
\textbf{1}^{T} & 1
\end{array}
\right]:P,Q\in \mathbb{D}^{n\times n}\bigg\}.\end{align}
\end{proof}
\subsection{Resistance distance and Kirchhoff index of a connected dual number weighted  graph}
  In this Section, we give some formulas for the  resistance distance and  Kirchhoff index of a connected  dual number weighted  graph $G^{w}$.

 For  a connected  graph $G$ with $n$ vertices and $i,j\in[n]$,
 let $R_{ij}(G)$ and $R_{ij}(G^{w})$ denote the resistance distance between vertices $i$ and $j$ in  $G$ and $G^{w}$, respectively.
   In the following, using Lemmas \ref{eth} and \ref{lem2.1}, we give  explicit  formulas for $R_{ij}(G^{w})$ expressed  by the  $\{1\}$-inverse and   Moore-Penrose   inverse of the Laplacian matrix of  $G^{w}$.
\begin{thm}\label{thmww}
 Let  $L_{w}=L+\widehat{L}\varepsilon$ be the  Laplacian matrix
 of a connected graph $G^{w}$ with $n$ vertices,  where $L$ and $\widehat{L}$ are the Laplacian matrix and perturbation Laplacian matrix of the connected graph $G$, respectively. Then for $i,j\in [n]$, we have

$(a)$
\begin{align}\nonumber
R_{ij}(G^{w})
&=(L_{w}^{\dag})_{ii}+(L_{w}^{\dag})
_{jj}-2(L_{w}^{\dag})_{ij}\\ \nonumber
&=(L_{w}+\frac{1}{n}J)^{-1}_{ii}+
(L_{w}+\frac{1}{n}J)^{-1}_{ jj }-2(L_{w}+\frac{1}{n}J)^{-1}_{ij}\\ \nonumber
&=(L^{\dag})_{ii}+(L^{\dag})
_{jj}-2(L^{\dag})_{ij}-((L^{\dag}\widehat{L}L^{\dag})_{ii}+(L^{\dag}\widehat{L}L^{\dag})
_{jj}-2(L^{\dag}\widehat{L}L^{\dag})_{ij})\varepsilon \\ \nonumber
&=R_{ij}(G)-((L^{\dag}\widehat{L}L^{\dag})_{ii}+(L^{\dag}\widehat{L}L^{\dag})
_{jj}-2(L^{\dag}\widehat{L}L^{\dag})_{ij})\varepsilon.
\end{align}

$(b)$  \begin{align}\nonumber
R_{ij}(G^{w})=&(L_{w}^{\{1\}})_{ii}+(L_{w}^{\{1\}})
_{jj}-(L_{w}^{\{1\}})_{ij}-(L_{w}^{\{1\}})_{ji}\\ \nonumber
=&(L^{\{1\}})_{ii}+(L^{\{1\}})
_{jj}-(L^{\{1\}})_{ij}-(L^{\{1\}})_{ji}\\ \nonumber
&-((L^{\{1\}}\widehat{L}L^{\{1\}})_{ii}+
(L^{\{1\}}\widehat{L}L^{\{1\}})_{jj}-
(L^{\{1\}}\widehat{L}L^{\{1\}})_{ij}-(L^{\{1\}}\widehat{L}L^{\{1\}})_{ji})\varepsilon
\\ \nonumber
=&R_{ij}(G)-((L^{\{1\}}\widehat{L}L^{\{1\}})_{ii}+
(L^{\{1\}}\widehat{L}L^{\{1\}})_{jj}-
(L^{\{1\}}\widehat{L}L^{\{1\}})_{ij}-(L^{\{1\}}\widehat{L}L^{\{1\}})_{ji})\varepsilon.\end{align}

\end{thm}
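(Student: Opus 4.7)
The plan is to interpret $R_{ij}(G^{w})$ as the potential difference $(\textbf{1}_{i}-\textbf{1}_{j})^{T}v$, where $v$ solves the dual Kirchhoff equation $L_{w}v=\textbf{1}_{i}-\textbf{1}_{j}$, and then translate this into the stated formulas via the generalized-inverse expressions supplied by Lemma \ref{lem2.1}. First I would check solvability: by Lemma \ref{eth}(b) it suffices to verify $L_{w}L_{w}^{\dagger}(\textbf{1}_{i}-\textbf{1}_{j})=\textbf{1}_{i}-\textbf{1}_{j}$, and this follows immediately from $L_{w}L_{w}^{\dagger}=I-\frac{1}{n}J$ (Lemma \ref{lem2.1}(c)) together with $\textbf{1}^{T}(\textbf{1}_{i}-\textbf{1}_{j})=0$; Lemma \ref{eth}(a) then transfers the identity to an arbitrary $\{1\}$-inverse. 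Next I would show that $(\textbf{1}_{i}-\textbf{1}_{j})^{T}v$ is independent of the chosen solution. Writing $v=L_{w}^{\{1\}}(\textbf{1}_{i}-\textbf{1}_{j})+(I-L_{w}^{\{1\}}L_{w})u$ via Lemma \ref{eth}(b), the correction term lies in $\ker L_{w}$, which I claim equals $\mathbb{D}\cdot\textbf{1}$: splitting $v_{s}+v_{d}\varepsilon\in\ker L_{w}$ into real and infinitesimal parts forces $Lv_{s}=0$ and $Lv_{d}+\widehat{L}v_{s}=0$, so $v_{s}=\alpha\textbf{1}$ and, using that the row-sums of $\widehat{L}$ vanish, $v_{d}=\beta\textbf{1}$ as well. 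Since $(\textbf{1}_{i}-\textbf{1}_{j})^{T}\textbf{1}=0$, the quantity $(\textbf{1}_{i}-\textbf{1}_{j})^{T}L_{w}^{\{1\}}(\textbf{1}_{i}-\textbf{1}_{j})$ does not depend on the particular $\{1\}$-inverse, and in particular coincides with $(\textbf{1}_{i}-\textbf{1}_{j})^{T}L_{w}^{\dagger}(\textbf{1}_{i}-\textbf{1}_{j})$.

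For part (a) I would specialise to $L_{w}^{\dagger}$. Its symmetry, a direct consequence of the Moore--Penrose conditions in equation \eqref{mpin}, lets me rewrite the quadratic form as $(L_{w}^{\dagger})_{ii}+(L_{w}^{\dagger})_{jj}-2(L_{w}^{\dagger})_{ij}$. Substituting $L_{w}^{\dagger}=L^{\dagger}-L^{\dagger}\widehat{L}L^{\dagger}\varepsilon$ from Lemma \ref{lem2.1}(c) yields the third equality, whose standard part agrees with $R_{ij}(G)$ by the classical Klein--Randi\'{c} formula. For the $(L_{w}+\frac{1}{n}J)^{-1}$ form I would use $L_{w}^{\dagger}=(L_{w}+\frac{1}{n}J)^{-1}-\frac{1}{n}J$, also from Lemma \ref{lem2.1}(c), and once more observe that $(\textbf{1}_{i}-\textbf{1}_{j})^{T}J(\textbf{1}_{i}-\textbf{1}_{j})=0$ to discard the $\frac{1}{n}J$ correction.

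For part (b) I would repeat the derivation with a general $\{1\}$-inverse $L_{w}^{\{1\}}$, which need not be symmetric, obtaining $R_{ij}(G^{w})=(L_{w}^{\{1\}})_{ii}+(L_{w}^{\{1\}})_{jj}-(L_{w}^{\{1\}})_{ij}-(L_{w}^{\{1\}})_{ji}$. I would then choose the specific $\{1\}$-inverse $L_{w}^{\{1\}}=L^{\{1\}}-L^{\{1\}}\widehat{L}L^{\{1\}}\varepsilon$ provided by Lemma \ref{lem2.1}(b), expand the standard and infinitesimal parts, and identify the standard part with $R_{ij}(G)$ via the classical $\{1\}$-inverse formulation of resistance distance. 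The main obstacle, shared by both parts, is rigorously establishing that $(\textbf{1}_{i}-\textbf{1}_{j})^{T}L_{w}^{\{1\}}(\textbf{1}_{i}-\textbf{1}_{j})$ is independent of the choice of $\{1\}$-inverse in the dual-number setting; this reduces to the kernel identification $\ker L_{w}=\mathbb{D}\cdot\textbf{1}$ outlined above, after which every remaining step is a direct matrix substitution.
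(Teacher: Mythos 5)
Your proposal is correct and follows essentially the same route as the paper: the electrical interpretation $L_{w}v=\mathbf{1}_{i}-\mathbf{1}_{j}$, solvability via Lemma \ref{eth} together with $L_{w}L_{w}^{\dag}=I-\frac{1}{n}J$, and then substitution of the generalized-inverse expressions from Lemma \ref{lem2.1}. The only cosmetic difference is that you justify independence of the choice of $\{1\}$-inverse by identifying $\ker L_{w}=\mathbb{D}\cdot\mathbf{1}$ through a standard/infinitesimal-part splitting, whereas the paper reaches the same conclusion via the parametrization of all $\{1\}$-inverses in terms of $L_{w}^{\dag}$ and the projector $\frac{1}{n}J$; both arguments are sound.
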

\begin{proof} When $i=j$, $R_{ij}(G^{w})=0$. It is clear that this theorem holds. Next, we prove the case for
$i\neq j$.

$(a)$ For fixed vertices $i,j\in[n]$ and $i\neq j$, assume that a
voltage be applied  between $i$ and $j$ such that the net current $Y$ $ (Y>0)$ flows from source $i$  into sink $j$.
%Let a voltage be connected between fixed two vertices $i,j\in [n]$. Let $Y>0$  be the net current out of source $i$ and into sink $j$.
 Let $v=(v_{1},v_{2},\cdots,v_{n})^{T}\in\mathbb{D}^{n} $, where $v_{s}$  denotes  the electric potential at the vertex $s$, for $s\in [n]$. Then $v_{i}>v_{j}$. Denote  the current flowing from vertex
$s$ to its adjacent vertex $t$
 in $G^{w}$
 by
$I_{st}$ $(I_{st}=-I_{ts})$.
Then \begin{align}\nonumber
\sum_{s\sim t}I_{st}=\begin{cases}
Y, & s=i,\\
-Y, & s=j,\\
0,  & s\neq i,j
\end{cases}=Y(\textbf{1}_{i}-\textbf{1}_{j})_{s}.
\end{align}
Since  the weight $w_{e}=a_{st}^{w}$ of  the edge $e=\{s,t\}$ of $G^{w}$,   the resistance on $e$ is $\Omega_{st}=\frac{1}{a_{st}^{w}}$.
 Then
\begin{align}\nonumber
\sum_{s\sim t}I_{st}&=\sum_{s\sim t}\frac{v_{s}-v_{t}}{\Omega_{st}}=\sum_{s\sim t}a_{st}^{w}(v_{s}-v_{t})=v_{s}\sum_{s\sim t}a_{st}^{w}-\sum_{s\sim t}a_{st}^{w}v_{t}\\ \nonumber
&=d_{s}^{w}v_{s}-(A_{w}v)_{s}
=((D_{w}-A_{w})v)_{s}=(L_{w}v)_{s}.
\end{align}
 So $$(L_{w}v)_{s}=\begin{cases}
Y, & s=i,\\
-Y, & s=j,\\
0,  & s\neq i,j
\end{cases}=Y(\textbf{1}_{i}-\textbf{1}_{j})_{s},$$
that is, \begin{align}\label{juxi}
L_{w}v=Y(\textbf{\textbf{1}}_{i}-\textbf{1}_{j}).
\end{align}  In fact, Equation \eqref{juxi} must have  solutions, but we still provide a mathematical proof to demonstrate that Equation \eqref{juxi} has  solutions.   From  Equation  \eqref{kpll}, we know that $L_{w}L_{w}^{\dag}=I-\frac{1}{n}J$. Then we have $$L_{w}L_{w}^{\dag}(Y(\textbf{1}_{i}-\textbf{1}_{j}))=Y(I-\frac{1}{n}J)(\textbf{1}_{i}-\textbf{1}_{j})
=Y(\textbf{1}_{i}-\textbf{1}_{j}).$$
  By Lemma \ref{eth}$(b)$,   it follows that the equation
$L_{w}v=Y(\textbf{\textbf{1}}_{i}-\textbf{1}_{j})$ has solutions and its general solution is
$$v=L_{w}^{\dag}Y(\textbf{1}_{i}-\textbf{1}_{j})+(I-L_{w}^{\dag}L_{w})u,$$
where $u\in \mathbb{D}^{n}$.  Then the resistance distance between vertices $i$ and $j$ in $G^{w}$ is \begin{align}\nonumber
R_{ij}(G^{w})&=\frac{v_{i}-v_{j}}{Y}=\frac{1}{Y}(\textbf{1}_{i}-\textbf{1}_{j})^{T}v,\\ \nonumber
&=\frac{1}{Y}(\textbf{1}_{i}-\textbf{1}_{j})^{T}L_{w}^{\dag}Y(\textbf{1}_{i}-\textbf{1}_{j})+
\frac{1}{Y}(\textbf{1}_{i}-\textbf{1}_{j})^{T}
(I-L_{w}^{\dag}L_{w})u\\ \nonumber
&=(\textbf{1}_{i}-\textbf{1}_{j})^{T}L_{w}^{\dag}(\textbf{1}_{i}-\textbf{1}_{j})+
\frac{1}{Y}(\textbf{1}_{i}-\textbf{1}_{j})^{T}(\frac{1}{n}J)u\\ \nonumber
&=(\textbf{1}_{i}-\textbf{1}_{j})^{T}L_{w}^{\dag}(\textbf{1}_{i}-\textbf{1}_{j})\\ \nonumber
&=(L_{w}^{\dag})_{ii}+(L_{w}^{\dag})
_{jj}-(L_{w}^{\dag})_{ij}-(L_{w}^{\dag})_{ji}.
\end{align}
Since $L_{w}^{\dag}$ is symmetric,  we have \begin{align}\label{gs2}R_{ij}(G^{w})=(L_{w}^{\dag})_{ii}+(L_{w}^{\dag})
_{jj}-2(L_{w}^{\dag})_{ij}. \end{align}
Substituting  Equation   \eqref{nb2} into  Equation   \eqref{gs2},   we get\begin{align}\nonumber
R_{ij}(G^{w})
=(L_{w}+\frac{1}{n}J)^{-1}_{ii}+
(L_{w}+\frac{1}{n}J)^{-1}_{jj}-2(L_{w}+\frac{1}{n}J)^{-1}_{ij}.
\end{align}
Substituting  Equation   \eqref{zr} into  Equation   \eqref{gs2}, we have
\begin{align}\nonumber
R_{ij}(G^{w})
&=(L^{\dag})_{ii}+(L^{\dag})
_{jj}-2(L^{\dag})_{ij}-((L^{\dag}\widehat{L}L^{\dag})_{ii}+(L^{\dag}\widehat{L}L^{\dag})
_{jj}-2(L^{\dag}\widehat{L}L^{\dag})_{ij})\varepsilon\\ \nonumber
&=R_{ij}(G)-((L^{\dag}\widehat{L}L^{\dag})_{ii}+(L^{\dag}\widehat{L}L^{\dag})
_{jj}-2(L^{\dag}\widehat{L}L^{\dag})_{ij})\varepsilon.
\end{align}

$(b)$ Since $L_{w}^{\dag}$ is a  $\{1\}$-inverse of $L_{w}$,  by Lemma \ref{lem1}, it follows that any  $\{1\}$-inverse of  $L_{w}$ is represented  by
$$L_{w}^{\{1\}}=L_{w}^{\dag}+(I-L_{w}^{\dag}L_{w})P+Q(I-L_{w}L_{w}^{\dag}),$$
 where $P,Q\in\mathbb{R}^{n\times n}$. So \begin{align}\nonumber(\textbf{1}_{i}-\textbf{1}_{j})^{T}L_{w}^{\{1\}}(\textbf{1}_{i}-\textbf{1}_{j})
 =(\textbf{1}_{i}-\textbf{1}_{j})^{T}L_{w}^{\dag}(\textbf{1}_{i}-\textbf{1}_{j})=R_{ij}(G^{w}),\end{align}
i.e. \begin{align}\label{gs1}R_{ij}(G^{w})=(L_{w}^{\{1\}})_{ii}+(L_{w}^{\{1\}})_{jj}-
(L_{w}^{\{1\}})_{ij}-(L_{w}^{\{1\}})_{ji}
.\end{align}
From  Equation   \eqref{nb1}, we have\begin{align}\nonumber
R_{ij}(G^{w})
=&(L^{\{1\}})_{ii}+(L^{\{1\}})
_{jj}-(L^{\{1\}})_{ij}-(L^{\{1\}})_{ji}\\ \nonumber
&-((L^{\{1\}}\widehat{L}L^{\{1\}})_{ii}+
(L^{\{1\}}\widehat{L}L^{\{1\}})_{jj}-
(L^{\{1\}}\widehat{L}L^{\{1\}})_{ij}-(L^{\{1\}}\widehat{L}L^{\{1\}})_{ji})\varepsilon\\ \nonumber
=&R_{ij}(G)-((L^{\{1\}}\widehat{L}L^{\{1\}})_{ii}+
(L^{\{1\}}\widehat{L}L^{\{1\}})_{jj}-
(L^{\{1\}}\widehat{L}L^{\{1\}})_{ij}-(L^{\{1\}}\widehat{L}L^{\{1\}})_{ji})\varepsilon.
\end{align}
\end{proof}

The sum of the effective resistances over all pairs of
vertices in a connected dual number weighted  graph  $G^{w}$ is called the Kirchhoff index of $G^{w}$, denoted by $Kf(G^{w})$, i.e. \begin{align}\label{2l}Kf(G^{w})=\sum_{i<j}R_{ij}(G^{w})=\frac{1}{2}\sum_{i,j=1}^{n}R_{ij}(G^{w}).
\end{align}
\begin{thm}\label{tj}Let $L_{w}=L+\widehat{L}\varepsilon$ be the Laplacian matrix of a connected dual number weighted  graph $G^{w}$ with $n$ vertices, where $L$ and $\widehat{L}$ are the Laplacian matrix and perturbation Laplacian matrix of the connected graph $G$, respectively.   Then

$(a)$\begin{align}\nonumber
Kf(G^{w})&=n tr(L_{w}^{\{1\}})-\textbf{1}^{T}L_{w}^{\{1\}}\textbf{1}\\ \nonumber
&=n tr(L^{\{1\}})-\textbf{1}^{T}L^{\{1\}}\textbf{1}-(n tr(L^{\{1\}}\widehat{L}L^{\{1\}})-\textbf{1}^{T}L^{\{1\}}\widehat{L}L^{\{1\}}\textbf{1})
\varepsilon\\ \nonumber
&=Kf(G)-(n tr(L^{\{1\}}\widehat{L}L^{\{1\}})-\textbf{1}^{T}L^{\{1\}}\widehat{L}L^{\{1\}}\textbf{1})
\varepsilon.
\end{align}

$(b)$\begin{align}\nonumber
Kf(G^{w})=n tr(L_{w}^{\dag})
&=n tr(L^{\dag})-n tr(L^{\dag}\widehat{L}L^{\dag})\varepsilon\\ \nonumber
&=Kf(G)-n tr(L^{\dag}\widehat{L}L^{\dag})\varepsilon.
\end{align}
\end{thm}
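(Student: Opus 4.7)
The plan is to reduce Theorem \ref{tj} to Theorem \ref{thmww} by summing the resistance-distance formulas over all vertex pairs. Starting from the definition $Kf(G^w)=\tfrac{1}{2}\sum_{i,j=1}^{n}R_{ij}(G^{w})$ in Equation \eqref{2l}, I would substitute the $\{1\}$-inverse expression from Theorem \ref{thmww}$(b)$ for part $(a)$, and the Moore--Penrose expression from Theorem \ref{thmww}$(a)$ for part $(b)$. The only algebraic identities needed are $\sum_{i=1}^{n}(M)_{ii}=\mathrm{tr}(M)$ and $\sum_{i,j=1}^{n}(M)_{ij}=\mathbf{1}^{T}M\mathbf{1}$, which apply to any dual matrix $M\in\mathbb{D}^{n\times n}$.

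For part $(a)$, summing $R_{ij}(G^{w})=(L_{w}^{\{1\}})_{ii}+(L_{w}^{\{1\}})_{jj}-(L_{w}^{\{1\}})_{ij}-(L_{w}^{\{1\}})_{ji}$ over all $(i,j)$ and dividing by $2$ immediately yields $Kf(G^{w})=n\,\mathrm{tr}(L_{w}^{\{1\}})-\mathbf{1}^{T}L_{w}^{\{1\}}\mathbf{1}$. I would then substitute the $\{1\}$-inverse representation $L_{w}^{\{1\}}=L^{\{1\}}-L^{\{1\}}\widehat{L}L^{\{1\}}\varepsilon$ from Lemma \ref{lem2.1}$(b)$, separating standard and infinitesimal parts, and use the real-graph identity $Kf(G)=n\,\mathrm{tr}(L^{\{1\}})-\mathbf{1}^{T}L^{\{1\}}\mathbf{1}$ (which itself follows by applying the same summation to the classical Klein--Randi\'{c} formula) to recognize the standard part as $Kf(G)$.

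For part $(b)$, summing $R_{ij}(G^{w})=(L_{w}^{\dag})_{ii}+(L_{w}^{\dag})_{jj}-2(L_{w}^{\dag})_{ij}$ gives $Kf(G^{w})=n\,\mathrm{tr}(L_{w}^{\dag})-\mathbf{1}^{T}L_{w}^{\dag}\mathbf{1}$. The key simplification I need to justify is that $\mathbf{1}^{T}L_{w}^{\dag}\mathbf{1}=0$. For this I will use Lemma \ref{lem2.1}$(c)$, which gives $L_{w}^{\dag}=L^{\dag}-L^{\dag}\widehat{L}L^{\dag}\varepsilon$, together with the spectral expansion \eqref{siii} in the proof of Lemma \ref{lem2.1}, which shows $L^{\dag}\mathbf{1}=0$ (since the eigenvectors $p_{1},\dots,p_{n-1}$ used in \eqref{siii} are all orthogonal to $p_{n}=\tfrac{1}{\sqrt{n}}\mathbf{1}$). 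Both the standard and infinitesimal parts of $\mathbf{1}^{T}L_{w}^{\dag}\mathbf{1}$ then vanish, leaving $Kf(G^{w})=n\,\mathrm{tr}(L_{w}^{\dag})$. Expanding by Lemma \ref{lem2.1}$(c)$ and invoking $Kf(G)=n\,\mathrm{tr}(L^{\dag})$ completes the proof.

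There is no serious obstacle here; the argument is essentially a trace-and-sum bookkeeping built on the formulas already established in Lemma \ref{lem2.1} and Theorem \ref{thmww}. The only subtlety worth stating carefully is the vanishing of $\mathbf{1}^{T}L_{w}^{\dag}\mathbf{1}$, which is what distinguishes the clean $n\,\mathrm{tr}(L_{w}^{\dag})$ formula in $(b)$ from the two-term formula in $(a)$, and which relies on $\mathbf{1}$ lying in the kernel of both $L$ and $\widehat{L}$.
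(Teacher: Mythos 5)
Your proposal is correct and follows essentially the same route as the paper: sum the resistance-distance formulas of Theorem \ref{thmww} over all pairs to obtain $Kf(G^{w})=n\,\mathrm{tr}(L_{w}^{\{1\}})-\mathbf{1}^{T}L_{w}^{\{1\}}\mathbf{1}$, then substitute the expressions from Lemma \ref{lem2.1}. The only cosmetic difference is in part $(b)$, where the paper shows $\mathbf{1}^{T}L_{w}^{\dag}\mathbf{1}=0$ via $L_{w}^{\dag}\mathbf{1}=(L_{w}^{\dag})^{2}L_{w}\mathbf{1}=0$ at the dual-matrix level, while you verify it componentwise from $L^{\dag}\mathbf{1}=0$; both are valid.
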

\begin{proof}
$(a)$ Let $L_{w}^{\{1\}}$ be a $\{1\}$-inverse of $L_{w}$. By  Equations   \eqref{2l} and \eqref{gs1}, we have
\begin{align}\nonumber
Kf(G^{w})&=\frac{1}{2}\sum_{i,j=1}^{n}(R_{ij}(G^{w}))=\frac{1}{2}\sum_{i,j=1}^{n}((L_{w}^{\{1\}})_{ii}+(L_{w}^{\{1\}})
_{jj}-(L_{w}^{\{1\}})_{ij}-(L_{w}^{\{1\}})_{ji})\\ \nonumber
&=\frac{1}{2}(\sum_{i,j=1}^{n}(L_{w}^{\{1\}})_{ii}+\sum_{i,j=1}^{n}(L_{w}^{\{1\}})
_{jj}-\sum_{i,j=1}^{n}(L_{w}^{\{1\}})_{ij}-\sum_{i,j=1}^{n}(L_{w}^{\{1\}})_{ji})\\ \label{kfg}
&=ntr(L_{w}^{\{1\}})-\textbf{1}^{T}L_{w}^{\{1\}}\textbf{1}.
\end{align}
From Lemma \ref{lem2.1}$(b)$, we know that  $L^{\{1\}}- L^{\{1\}}\widehat{L}L^{\{1\}}\varepsilon$ is a   $\{1\}$-inverse of $L_{w}$  for any $\{1\}$-inverse $L^{\{1\}}$ of $L$.  Then  we have \begin{align}\nonumber Kf(G^{w})&=n tr(L^{\{1\}})-\textbf{1}^{T}L^{\{1\}}\textbf{1}-(n tr(L^{\{1\}}\widehat{L}L^{\{1\}})-\textbf{1}^{T}L^{\{1\}}
\widehat{L}L^{\{1\}}\textbf{1})\varepsilon\\ \nonumber
&=Kf(G)-(n tr(L^{\{1\}}\widehat{L}L^{\{1\}})-\textbf{1}^{T}L^{\{1\}}\widehat{L}L^{\{1\}}\textbf{1})
\varepsilon.\end{align}

$(b)$ Since $L_{w}^{\dag}$ is a  $\{1\}$-inverse of $L_{w}$,  by  Equation  \eqref{kfg} we get
$$Kf(G^{w})=ntr(L_{w}^{\dag})-\textbf{1}^{T}L_{w}^{\dag}\textbf{1}.$$
Note that $L_{w}^{\dag}\textbf{1}=L_{w}^{\dag}L_{w}L_{w}^{\dag}\textbf{1}=(L_{w}^{\dag})^{2}L_{w}\textbf{1}=0.$
Then we have  $Kf(G^{w})=n tr(L_{w}^{\dag}).$ From  Equation   \eqref{zr}, we get \begin{align}\nonumber Kf(G^{w})&=n tr(L^{\dag}-L^{\dag}\widehat{L}
L^{\dag}\varepsilon)\\ \nonumber
 &=n  tr(L^{\dag})-n tr(L^{\dag}\widehat{L}
L^{\dag})\varepsilon=Kf(G)-n  tr(L^{\dag}\widehat{L}L^{\dag})\varepsilon.\end{align}
\end{proof}

For a connected dual number weighted  graph $G^{w}$, we give formulas for the resistance distance and Kirchhoff index of $G^{w}$  by using Theorem \ref{thn3.1}. These formulas involve    the blocks of the Laplacian matrix of $G$ and $G^{w}$ as well as the perturbation Laplacian matrix of $G$  as follows.
  \begin{thm}\label{hunk}Let $L_{w}$ be the  Laplacian matrix of a connected dual number weighted  graph $G^{w}$ with $n$ vertices. Let $\widetilde{L}_{11}=L_{11}+\widehat{L}_{11}\varepsilon\in \mathbb{D}^{(n-1)\times(n-1)}$ be the submatrix of $L_{w}$ obtained by deleting the $n$-th row and $n$-th column.
  Then the following results hold.

$(a)$ For $i,j\in[n-1]$, we have
\begin{align}\nonumber R_{ij}(G^{w})=&(L_{11}^{-1})_{ii}+(L_{11}^{-1})_{jj}
-2(L_{11}^{-1})_{ij}\\ \nonumber  &-((L_{11}^{-1}\widehat{L}_{11}L_{11}^{-1})_{ii}+(L_{11}^{-1}\widehat{L}_{11}L_{11}^{-1})_{jj}
-2(L_{11}^{-1}\widehat{L}_{11}L_{11}^{-1})_{ij})\varepsilon\\ \nonumber
&=R_{ij}(G)-((L_{11}^{-1}\widehat{L}_{11}L_{11}^{-1})_{ii}+(L_{11}^{-1}\widehat{L}_{11}L_{11}^{-1})_{jj}
-2(L_{11}^{-1}\widehat{L}_{11}L_{11}^{-1})_{ij})\varepsilon.\end{align}

$(b)$ For $i\in[n-1] $, we have
\begin{align}\nonumber R_{in}(G^{w})&=(L_{11}^{-1})_{ii}-(L_{11}^{-1}\widehat{L}_{11}L_{11}^{-1})_{ii}\varepsilon\\ \nonumber
&=R_{in}(G)-(L_{11}^{-1}\widehat{L}_{11}L_{11}^{-1})_{ii}\varepsilon.\end{align}

$(c)$
\begin{align}\nonumber
Kf(G^{w})
&=n tr(L_{11}^{-1})-\textbf{1}^{T}L_{11}^{-1}\textbf{1}-(n tr(L_{11}^{-1}\widehat{L}_{11}L_{11}^{-1})-\textbf{1}^{T}L_{11}^{-1}\widehat{L}_{11}L_{11}^{-1}\textbf{1}
)\varepsilon\\ \nonumber
&=Kf(G)-(n tr(L_{11}^{-1}\widehat{L}_{11}L_{11}^{-1})-
\textbf{1}^{T}L_{11}^{-1}\widehat{L}_{11}L_{11}^{-1}\textbf{1})\varepsilon.
\end{align}
\end{thm}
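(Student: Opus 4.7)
The plan is to specialize the general $\{1\}$-inverse expressions in Theorems \ref{thmww}(b) and \ref{tj}(a) to the particular $\{1\}$-inverse provided by Theorem \ref{thn3.1}, namely
$$X = \begin{bmatrix} \widetilde{L}_{11}^{-1} & 0 \\ 0 & 0 \end{bmatrix}.$$
Since $G$ is connected, $L_{11}$ is invertible, so by Lemma \ref{lem1}(c) $\widetilde{L}_{11} = L_{11} + \widehat{L}_{11}\varepsilon$ is invertible with
$$\widetilde{L}_{11}^{-1} = L_{11}^{-1} - L_{11}^{-1}\widehat{L}_{11}L_{11}^{-1}\varepsilon.$$
This single identity drives all three parts.

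For part (a), fix $i,j \in [n-1]$. All four entries $(X)_{ii},(X)_{jj},(X)_{ij},(X)_{ji}$ lie in the top-left block of $X$ and coincide with the corresponding entries of the symmetric matrix $\widetilde{L}_{11}^{-1}$, so Theorem \ref{thmww}(b) specializes to $R_{ij}(G^w) = (\widetilde{L}_{11}^{-1})_{ii} + (\widetilde{L}_{11}^{-1})_{jj} - 2(\widetilde{L}_{11}^{-1})_{ij}$. Substituting the expansion of $\widetilde{L}_{11}^{-1}$ produces the claimed dual formula; setting $\varepsilon = 0$ recovers the classical identity and identifies the standard part as $R_{ij}(G)$.

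For part (b), take $i \in [n-1]$ and $j = n$. Because the last row and column of $X$ vanish, $(X)_{nn}=(X)_{in}=(X)_{ni}=0$ while $(X)_{ii}=(\widetilde{L}_{11}^{-1})_{ii}$, so Theorem \ref{thmww}(b) collapses to $R_{in}(G^w) = (\widetilde{L}_{11}^{-1})_{ii}$. Expanding $\widetilde{L}_{11}^{-1}$ again yields the stated formula.

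For part (c), apply Theorem \ref{tj}(a) with the same $X$. Because only the top-left $(n-1)\times(n-1)$ block of $X$ is nonzero, $tr(X) = tr(\widetilde{L}_{11}^{-1})$ and $\textbf{1}^{T} X \textbf{1} = \textbf{1}^{T}\widetilde{L}_{11}^{-1}\textbf{1}$, where on the right $\textbf{1}$ is now $(n-1)$-dimensional. Inserting the expansion of $\widetilde{L}_{11}^{-1}$ cleanly separates the standard and infinitesimal parts, with standard part $Kf(G) = n\,tr(L_{11}^{-1}) - \textbf{1}^{T} L_{11}^{-1}\textbf{1}$. The calculations here amount to clean substitution; the only point requiring care is that Theorems \ref{thmww} and \ref{tj} are valid with any $\{1\}$-inverse (so the simple block $X$ is legitimate) and that one must respect the two different dimensions ($n$ and $n-1$) of the all-ones vectors when rewriting $\textbf{1}^{T} X \textbf{1}$.
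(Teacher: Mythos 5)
Your proposal is correct and follows essentially the same route as the paper: both substitute the block $\{1\}$-inverse $X=\left[\begin{smallmatrix}\widetilde{L}_{11}^{-1}&0\\0&0\end{smallmatrix}\right]$ from Theorem \ref{thn3.1} into the $\{1\}$-inverse formulas of Theorems \ref{thmww}(b) and \ref{tj}(a), and then expand $\widetilde{L}_{11}^{-1}=L_{11}^{-1}-L_{11}^{-1}\widehat{L}_{11}L_{11}^{-1}\varepsilon$ via Lemma \ref{lem1}(c). Your added remarks on the symmetry of the entries used and the dimension of the all-ones vectors are consistent with, and slightly more explicit than, the paper's argument.
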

\begin{proof}
 From Theorem \ref{thn3.1}, we know that \begin{align}\nonumber X=\left[\begin{array}{cc}
\widetilde{L}_{11}^{-1}& 0 \\
0 & 0
\end{array}
\right]
\end{align}
is a  $\{1\}$-inverse of $L_{w}$. From  Equation   \eqref{gs1}, we have \begin{align}\nonumber R_{ij}(G^{w})&=(X)_{ii}+(X)
_{jj}-(X)_{ij}-(X)_{ji}\\ \nonumber
&= (X)_{ii}+(X)
_{jj}-2(X)_{ij}, \end{align}
where $i,j\in[n].$

$(a)$ When $i,j\in[n-1]$, we have
$$R_{ij}(G^{w})=(\widetilde{L}_{11}^{-1})_{ii}+(\widetilde{L}_{11}^{-1})
_{jj}-2(\widetilde{L}_{11}^{-1})_{ij}.$$
By $\widetilde{L}_{11}^{-1}=(L_{11}+\widehat{L}_{11}\varepsilon)^{-1}=L_{11}^{-1}-L_{11}^{-1}
\widehat{L}_{11}L_{11}^{-1}\varepsilon,$  it follows that
 \begin{align}\nonumber R_{ij}(G^{w})&=(L_{11}^{-1})_{ii}+(L_{11}^{-1})_{jj}-2(L_{11}^{-1})_{ij}\\ \nonumber
 &-((L_{11}^{-1}\widehat{L}_{11}L_{11}^{-1})_{ii}+(L_{11}^{-1}\widehat{L}_{11}L_{11}^{-1})_{jj}
-2(L_{11}^{-1}\widehat{L}_{11}L_{11}^{-1})_{ij})\varepsilon\\ \nonumber
&=R_{ij}(G)-((L_{11}^{-1}\widehat{L}_{11}L_{11}^{-1})_{ii}+(L_{11}^{-1}\widehat{L}_{11}L_{11}^{-1})_{jj}
-2(L_{11}^{-1}\widehat{L}_{11}L_{11}^{-1})_{ij})\varepsilon.\end{align}

$(b)$ When $i\in[n-1]$,  it is clear that $$(X)_{in}=(X)
_{nn}=0.$$
Hence, \begin{align}\nonumber R_{in}(G^{w})=(X)_{ii}=(\widetilde{L}_{11}^{-1})_{ii}
&=(L_{11}^{-1})_{ii}-(L_{11}^{-1}\widehat{L}_{11}L_{11}^{-1})_{ii}\varepsilon\\ \nonumber
&=R_{in}(G)-(L_{11}^{-1}\widehat{L}_{11}L_{11}^{-1})_{ii}\varepsilon.\end{align}

$(c)$ From  Equation   \eqref{kfg}, it follows that \begin{align}\nonumber Kf(G^{w})&=n tr(\widetilde{L}_{11}^{-1})-\textbf{1}^{T}\widetilde{L}_{11}^{-1}\textbf{1} \\ \nonumber
&=n tr(L_{11}^{-1})-\textbf{1}^{T}L_{11}^{-1}\textbf{1}-
(n tr(L_{11}^{-1}\widehat{L}_{11}L_{11}^{-1})-\textbf{1}^{T}L_{11}^{-1}\widehat{L}_{11}L_{11}^{-1}\textbf{1})\varepsilon\\ \nonumber
&=Kf(G)-(n tr(L_{11}^{-1}\widehat{L}_{11}L_{11}^{-1})-
\textbf{1}^{T}L_{11}^{-1}\widehat{L}_{11}L_{11}^{-1}\textbf{1})\varepsilon.\end{align}
\end{proof}
\begin{rmk}
For $i,j\in[n],i\neq j$, by permuting the labels of the vertices such that $i\in[n-1],j=n$,
Theorem \ref{hunk}$(b)$ is a simpler  formula
 relative to $(a)$.
\end{rmk}
\subsection{ Perturbation bounds  for resistance distances and Kirchhoff indexes of connected  graphs}

Let $G^{w}$ be a connected dual number weighted  graph  with $n$ vertices.  As shown in Theorem \ref{tj}, the standard part of  $ Kf(G^{w})$ is $Kf(G)$.   We call the infinitesimal part of  $Kf(G^{w})-Kf(G)$  the   Kirchhoff index  perturbation of $G$, denoted by $\Delta Kf(G^{w})$. Similarly, we say the infinitesimal part of $  R_{ij}(G^{w})-R_{ij}(G) $ the resistance distance  perturbation between vertices  $i$ and $j$ in $G$, denoted by  $\Delta R_{ij}(G^{w})$, for $i,j\in[n]$.

In this Section, we give two upper bounds for the Kirchhoff index perturbation of  a graph with dual number resistance perturbations.  When  the dual number resistance perturbation is on a single edge, we derive expressions for the resistance distance perturbation and establish  bounds for the Kirchhoff index perturbation.

%\red{In this subsection, we give two  upper bounds for the  Kirchhoff index error of a dual number weighted  graph with multiple perturbed edges.  For the case of a single perturbed edge,  we derive expressions for the resistance distance error and establish bounds for the Kirchhoff index error.}

For a real matrix $A$, the spectral radius of $A$ is  denoted as $\rho(A).$  Let $L$ be the  Laplacian matrix of a graph $G$. Let $\lambda_{1}\geq\lambda_{2}\geq\cdots\geq\lambda_{n-1}\geq\lambda_{n}=0$  be the eigenvalues of
$L$.
 The second smallest eigenvalue $\lambda_{n-1}$ of $L$ is called the algebraic connectivity of $G$.  The graph $G$  is connected if and only if $\lambda_{n-1}>0$
 \cite{fiedler1989laplacian}.  %In the following, we give  upper bounds for $|\Delta Kf(G^{w})|$, which  involve  $\kappa(G)$ of the graph $G$, the eigenvalues of $L$ and the dual part $\widehat{L}$ of $L_{w}$.
\begin{thm}\label{nnilo}
Let $L_{w}=L+\widehat{L}\varepsilon$ be the  Laplacian matrix of a connected dual  number weighted graph $G^{w}$ with
$n$ vertices,  where $L$ and $\widehat{L}$ are the Laplacian matrix and perturbation Laplacian matrix of the connected graph $G$, respectively.  Let $\lambda_{1}\geq\lambda_{2}\geq\cdots\geq\lambda_{n-1}\geq\lambda_{n}$ and  $ \mu_{1},\mu_{2},\cdots,\mu_{n-1},\mu_{n}$ be the eigenvalues of $L$
and $\widehat{L}$, respectively.
Then

$(a)$ \begin{align}\nonumber|\Delta Kf(G^{w})|\leq \frac{n}{\lambda_{n-1}^{2}}\sum\limits_{i=1}^{n}|\mu_{i}|.\end{align}

$(b)$
\begin{align}\nonumber|\Delta Kf(G^{w})|\leq n\rho(\widehat{L})\sum^{n-1}_{i=1}\frac{1}{\lambda_{i}^{2}}.
\end{align}
\end{thm}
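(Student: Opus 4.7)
The plan is to reduce both bounds to an estimate of $|\mathrm{tr}(L^{\dag}\widehat{L}L^{\dag})|$, since Theorem \ref{tj}(b) already tells us that
\[
\Delta Kf(G^{w})=-n\,\mathrm{tr}(L^{\dag}\widehat{L}L^{\dag})=-n\,\mathrm{tr}((L^{\dag})^{2}\widehat{L}),
\]
so $|\Delta Kf(G^{w})|=n|\mathrm{tr}((L^{\dag})^{2}\widehat{L})|$. The two bounds then come from expanding this trace in two different eigenbases, one adapted to $L$ and the other to $\widehat{L}$.

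For part (b), I would use the spectral decomposition of $L$ that already appears in the proof of Lemma \ref{lem2.1}. Let $\{p_{1},\dots,p_{n}\}$ be an orthonormal system of eigenvectors of $L$ with $Lp_{i}=\lambda_{i}p_{i}$. From \eqref{siii}, $L^{\dag}=\sum_{i=1}^{n-1}\lambda_{i}^{-1}p_{i}p_{i}^{T}$, hence $(L^{\dag})^{2}=\sum_{i=1}^{n-1}\lambda_{i}^{-2}p_{i}p_{i}^{T}$, which gives
\[
\mathrm{tr}((L^{\dag})^{2}\widehat{L})=\sum_{i=1}^{n-1}\frac{1}{\lambda_{i}^{2}}\,p_{i}^{T}\widehat{L}p_{i}.
\]
Since $\widehat{L}$ is real symmetric and $p_{i}$ is a unit vector, the Rayleigh quotient bound gives $|p_{i}^{T}\widehat{L}p_{i}|\le\rho(\widehat{L})$, and multiplying by $n$ yields (b).

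For part (a), I would instead expand in the eigenbasis of $\widehat{L}$. Let $\{q_{1},\dots,q_{n}\}$ be an orthonormal basis of eigenvectors of $\widehat{L}$ with eigenvalues $\mu_{j}$, so $\widehat{L}=\sum_{j=1}^{n}\mu_{j}q_{j}q_{j}^{T}$. Then
\[
\mathrm{tr}((L^{\dag})^{2}\widehat{L})=\sum_{j=1}^{n}\mu_{j}\,q_{j}^{T}(L^{\dag})^{2}q_{j}.
\]
Because $(L^{\dag})^{2}$ is positive semidefinite with largest eigenvalue $1/\lambda_{n-1}^{2}$, one has $0\le q_{j}^{T}(L^{\dag})^{2}q_{j}\le 1/\lambda_{n-1}^{2}$. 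Taking absolute values inside the sum then gives $|\mathrm{tr}((L^{\dag})^{2}\widehat{L})|\le\lambda_{n-1}^{-2}\sum_{j=1}^{n}|\mu_{j}|$, and multiplication by $n$ produces (a).

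The main technical point, rather than an obstacle, is simply recognising that the two natural bounds are achieved by diagonalizing opposite factors in the product $(L^{\dag})^{2}\widehat{L}$: diagonalizing $L$ pushes all the $\lambda$'s into the sum (giving (b)), while diagonalizing $\widehat{L}$ isolates a single worst-case spectral factor $1/\lambda_{n-1}^{2}$ and sends all the spectral mass of $\widehat{L}$ into $\sum|\mu_{j}|$ (giving (a)). The only subtlety to check is that $L^{\dag}$ has no contribution from the zero eigenvalue of $L$, but this is already handled by \eqref{siii} in Lemma \ref{lem2.1}.
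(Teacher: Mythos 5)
Your proposal is correct and follows essentially the same route as the paper: both parts reduce to bounding $n\lvert\mathrm{tr}((L^{\dag})^{2}\widehat{L})\rvert$, with (a) obtained by expanding in the eigenbasis of $\widehat{L}$ and bounding the Rayleigh quotients of $(L^{\dag})^{2}$ by $1/\lambda_{n-1}^{2}$, and (b) by expanding in the eigenbasis of $L$ and bounding $\lvert p_{i}^{T}\widehat{L}p_{i}\rvert$ by $\rho(\widehat{L})$. This matches the paper's argument step for step.
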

\begin{proof}
 From Theorem \ref{tj}$(b)$, we have  \begin{align}\nonumber|\Delta Kf(G^{w})|=n|tr(L^{\dag}\widehat{L}L^{\dag})|.\end{align}
  Since $\widehat{L}$ is real symmetric, we assume that  $q_{i}$  is the unit eigenvector corresponding to the eigenvalue $\mu_{i}$ and $q_{i}$ is orthogonal to $q_{j}$, for $i,j\in[n]$, $i\neq j$. Then  \begin{align}\label{3.14}\widehat{L}=\sum^{n}_{i=1}\mu_{i}q_{i}q_{i}^{T}.\end{align}

$(a)$ Then
\begin{align}\nonumber
|\Delta Kf(G^{w})|&=n|tr(L^{\dag}\widehat{L}L^{\dag})|=n|tr((L^{\dag})^{2}\widehat{L})|
\\ \nonumber
&=n|tr((L^{\dag})^{2}\sum^{n}_{i=1}
\mu_{i}q_{i}q_{i}^{T})|=n| \sum^{n}_{i=1}
\mu_{i}q_{i}^{T}(L^{\dag})^{2}q_{i}|
.\end{align}
By  Equation   \eqref{siii}, we get \begin{align}\label{mpv}(L^{\dag})^{2}=\sum^{n-1}_{i=1}\frac{1}{\lambda_{i}^{2}} p_{i}p_{i}^{T}.\end{align} Then the eigenvalues of $(L^{\dag})^{2}$ are $$ \frac{1}{\lambda_{1}^{2}},\cdots ,\frac{1}{\lambda_{n-1}^{2}},0.$$
Since  the Rayleigh quotient of $(L^{\dag})^{2}$ lies  between  smallest and  largest eigenvalues  of $(L^{\dag})^{2}$,   we have $$0\leq \frac{q_{i}^{T}(L^{\dag})^{2}q_{i}}{q_{i}^{T}q_{i}}=q_{i}^{T}(L^{\dag})^{2}q_{i}\leq \frac{1}{\lambda_{n-
1}^{2}}, \ \ i\in[n].$$
Hence, \begin{align}|\nonumber \Delta Kf(G^{w})|=n| \sum^{n}_{i=1}
\mu_{i}q_{i}^{T}(L^{\dag})^{2}q_{i} |
 \leq n \sum^{n}_{i=1}|\mu_{i}| q_{i}^{T}(L^{\dag})^{2}q_{i} \leq \frac{n\sum\limits_{i=1}^{n}|\mu_{i}|}{\lambda_{n-1}^{2}}.\end{align}

$(b)$ Similarly,  \begin{align}\nonumber|\Delta Kf(G^{w})|&=n|tr(L^{\dag}\widehat{L}L^{\dag})|=n|tr(\widehat{L}(L^{\dag})^{2})|=
n|tr(\widehat{L}\sum^{n-1}_{i=1}\frac{1}{\lambda_{i}^{2}}p_{i}p_{i}^{T})|\\ \nonumber
&=n|tr(\sum^{n-1}_{i=1}\frac{1}{\lambda_{i}^{2}}p_{i}^{T}\widehat{L}p_{i})|
\leq
n \sum^{n-1}_{i=1}\frac{1}{\lambda_{i}^{2}}|p_{i}^{T}\widehat{L}p_{i}|
.\end{align}
 Since the Rayleigh
quotient of $\widehat{L}$ satisfies $|p_{i}^{T}\widehat{L}p_{i}|\leq \rho(\widehat{L})$  for $i\in[n-1]$,   we have
$$|\Delta Kf(G^{w})|\leq n\rho(\widehat{L})\sum^{n-1}_{i=1}\frac{1}{\lambda_{i}^{2}}.$$
\end{proof}

Let
$G$ be a connected graph. Let $\tau$  denote the number of  spanning trees of  $G$ and $\tau(e)$ denote the number of spanning trees containing edge $e$ of $G$.  The following  formula for the resistance distance of $G$ expressed by $\tau$ and $\tau(e)$ is given in \cite{thomassen1990resistances}.

\begin{lem}\label{lm1}\cite{thomassen1990resistances}
Let  $e=\{i,j\}$ be an edge of a connected graph $G$.  Then the resistance distance between vertices $i$ and $j$ in $G$ is
$R_{ij}(G)=\frac{\tau(e)}{\tau}.$
\end{lem}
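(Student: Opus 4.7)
The plan is to derive $R_{ij}(G)=\tau(e)/\tau$ by combining the block-inverse formula of Theorem~\ref{hunk} with Kirchhoff's Matrix-Tree Theorem and its all-minors extension. Since relabelling vertices permutes rows and columns of $L$ symmetrically and affects none of the quantities in the statement, one may assume without loss of generality that $j=n$ and $i\in[n-1]$, so that $e=\{i,n\}$. Applying Theorem~\ref{hunk}(b) in the undeformed case $\widehat{L}=0$ yields
\[
R_{in}(G)=(L_{11}^{-1})_{ii},
\]
where $L_{11}$ is the $(n-1)\times(n-1)$ principal submatrix of $L$ obtained by striking row and column $n$. By Cramer's rule, $(L_{11}^{-1})_{ii}=\det(M)/\det(L_{11})$, where $M$ is the $(n-2)\times(n-2)$ matrix obtained from $L$ by striking rows and columns $i$ and $n$.

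The task thus reduces to identifying the two determinants combinatorially. Kirchhoff's Matrix-Tree Theorem immediately gives $\det(L_{11})=\tau$. The heart of the argument is to show $\det(M)=\tau(e)$. For this one invokes the All-Minors Matrix-Tree Theorem: the determinant $\det(M)$ equals the number of spanning $2$-forests of $G$ in which $i$ and $n$ belong to different trees. Because $e=\{i,n\}\in E$, there is an explicit bijection between such $2$-forests and spanning trees of $G$ that contain $e$---attach $e$ to the $2$-forest, or delete $e$ from the tree. Hence $\det(M)=\tau(e)$, and the identity follows.

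The main obstacle is the All-Minors Matrix-Tree Theorem, which, while classical, is not elementary and constitutes the substantive input beyond what the present paper already supplies. An alternative that sidesteps it is a direct electric-network argument: inject unit current at $i$ and extract at $j$; Ohm's law then forces the current traversing $e$ to equal $(v_i-v_j)/1=R_{ij}(G)$, while Kirchhoff's classical formula for edge currents identifies this same current with $\tau(e)/\tau$. Either route reaches the conclusion once the underlying combinatorial input is granted.
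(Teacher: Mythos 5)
Your argument is correct, but note that the paper itself supplies no proof of this lemma---it is simply imported from \cite{thomassen1990resistances}---so any comparison is with the classical literature rather than with an in-paper argument. Your main route is sound and non-circular: Theorem~\ref{hunk}(b) (whose proof nowhere uses Lemma~\ref{lm1}) gives $R_{in}(G)=(L_{11}^{-1})_{ii}$ in the unperturbed case, Cramer's rule converts this to $\det(M)/\det(L_{11})$ with $M$ the principal submatrix of $L$ obtained by deleting rows and columns $i$ and $n$, the Matrix-Tree Theorem identifies the denominator with $\tau$, and the all-minors version identifies the numerator with the number of spanning $2$-forests separating $i$ and $n$; the add-or-delete-$e$ bijection with spanning trees containing $e$ is exactly right and is where the hypothesis $e\in E$ is used. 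The only substantive external input is the all-minors theorem, which you correctly flag; for this principal two-by-two deletion the sign is $+1$, so no orientation bookkeeping is needed. One caution about your proposed alternative: ``Kirchhoff's classical formula for edge currents,'' specialized to the edge $e=\{i,j\}$ itself, states precisely that the current through $e$ under unit injected current is $\tau(e)/\tau$, so that route does not really sidestep the combinatorial input---it repackages the lemma's content---whereas your determinantal route is a genuine derivation.
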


 We give representations of $ \Delta R_{ij}(G^{w}_{e}) $  and the
upper and lower bounds of $ \Delta Kf(G^{w}_{e})$ as follows.

\begin{thm}\label{thm3,9}Let $e=\{i,j\}$  be an edge of a connected graph $G$ with $n$ vertices. Let $G_{e}^{w}$ be the dual number weighted  graph with the resistance perturbation $-\widehat{a}_{e}\in \mathbb{R}$  only on the edge $e$.

$(a)$ Let $\tau$ and  $\tau_{e}$ denote the number of spanning trees of $G$ and  the number of spanning trees containing the edge $e$ of $G$, respectively. Then
$$\Delta R_{ij}(G_{e}^{w})=-\widehat{a}_{e}(R_{ij}(G))^{2}=-\frac{\widehat{a}_{e}}{\tau^{2}}\tau_{e}^{2}.$$

$(b)$  Let $\lambda_{1}\geq\lambda_{2}\geq\cdots\geq\lambda_{n-1}\geq\lambda_{n}$  be the eigenvalues of the Laplacian matrix of $G$.
 If $\widehat{a}_{e}\neq0$, then \begin{align}\nonumber \frac{1}{\lambda_{1}^{2}}  \leq \frac{\Delta Kf(G^{w}_{e})}{-2n\widehat{a}_{e}}
\leq   \frac{1}{\lambda_{n-1}^{2}}.\end{align}

\end{thm}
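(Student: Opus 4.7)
The plan is to exploit the very simple structure of the perturbation Laplacian when only one edge is perturbed, then to reduce everything to quadratic forms and bound them by Rayleigh quotients.

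First I would write down $\widehat{L}$ explicitly. Since only the edge $e=\{i,j\}$ carries the perturbation $\widehat{a}_e$, the perturbation adjacency matrix has a single off-diagonal pair, and the perturbation degree matrix contributes $\widehat{a}_e$ at positions $(i,i)$ and $(j,j)$. Hence
\begin{align*}
\widehat{L} \;=\; \widehat{a}_e (\textbf{1}_i-\textbf{1}_j)(\textbf{1}_i-\textbf{1}_j)^{T}.
\end{align*}
This rank-one representation is the key observation that makes both parts collapse.

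For part (a), I would start from Theorem \ref{thmww}(a), which gives
\begin{align*}
\Delta R_{ij}(G_e^{w}) \;=\; -(\textbf{1}_i-\textbf{1}_j)^{T} L^{\dag}\widehat{L}L^{\dag}(\textbf{1}_i-\textbf{1}_j).
\end{align*}
Substituting the rank-one form of $\widehat{L}$ factors the right-hand side as $-\widehat{a}_e\bigl((\textbf{1}_i-\textbf{1}_j)^{T}L^{\dag}(\textbf{1}_i-\textbf{1}_j)\bigr)^{2}$, and the inner quadratic form is exactly $R_{ij}(G)$ by the Klein–Randi\'c formula (i.e.\ the $\widehat{a}_e=0$ case of Theorem \ref{thmww}(a)). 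This yields $\Delta R_{ij}(G_e^{w})=-\widehat{a}_e R_{ij}(G)^2$. The second equality then follows immediately by invoking Lemma \ref{lm1} to replace $R_{ij}(G)$ with $\tau_e/\tau$.

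For part (b), I would start from Theorem \ref{tj}(b), which gives $\Delta Kf(G_e^{w})=-n\,\mathrm{tr}(L^{\dag}\widehat{L}L^{\dag})$. Using the rank-one form of $\widehat{L}$ and the cyclic property of trace,
\begin{align*}
\Delta Kf(G_e^{w}) \;=\; -n\widehat{a}_e\,(\textbf{1}_i-\textbf{1}_j)^{T}(L^{\dag})^{2}(\textbf{1}_i-\textbf{1}_j).
\end{align*}
Now I would bound the quadratic form by the Rayleigh quotient of $(L^{\dag})^{2}$. From \eqref{mpv}, the nonzero eigenvalues of $(L^{\dag})^{2}$ are $1/\lambda_1^{2}\leq\cdots\leq 1/\lambda_{n-1}^{2}$, and the zero eigenvalue has eigenvector $\textbf{1}$. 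Since $(\textbf{1}_i-\textbf{1}_j)\perp\textbf{1}$, the vector lies entirely in the range of $L^{\dag}$, so
\begin{align*}
\tfrac{1}{\lambda_1^{2}}\,\|\textbf{1}_i-\textbf{1}_j\|^{2}\;\leq\;(\textbf{1}_i-\textbf{1}_j)^{T}(L^{\dag})^{2}(\textbf{1}_i-\textbf{1}_j)\;\leq\;\tfrac{1}{\lambda_{n-1}^{2}}\,\|\textbf{1}_i-\textbf{1}_j\|^{2}.
\end{align*}
With $\|\textbf{1}_i-\textbf{1}_j\|^{2}=2$, dividing through by $-2n\widehat{a}_e$ gives the claimed inequality. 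The only mild subtlety, which I should flag, is that the direction of the inequality does not depend on the sign of $\widehat{a}_e$ once we isolate the ratio $\Delta Kf(G_e^{w})/(-2n\widehat{a}_e)$, since the same factor appears in the numerator and denominator; this is the reason the statement is phrased as a two-sided bound on this normalized quantity rather than on $\Delta Kf(G_e^{w})$ itself. No step requires a genuinely difficult estimate—the whole argument is a rank-one reduction followed by a Rayleigh quotient, so the main obstacle is just to be careful with signs and with the orthogonality of $\textbf{1}_i-\textbf{1}_j$ to the kernel of $L$.
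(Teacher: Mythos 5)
Your proof is correct, and the one organizing idea you add that the paper never writes down is the rank-one factorization $\widehat{L}=\widehat{a}_{e}(\textbf{1}_{i}-\textbf{1}_{j})(\textbf{1}_{i}-\textbf{1}_{j})^{T}$; it makes both parts instances of the same computation. For part $(a)$ the paper instead relabels the vertices so that $e=\{n-1,n\}$ and works with the block formula of Theorem \ref{hunk}$(b)$: there $\widehat{L}_{11}$ has a single nonzero entry $\widehat{a}_{e}$ in position $(n-1,n-1)$, so $-(L_{11}^{-1}\widehat{L}_{11}L_{11}^{-1})_{n-1,n-1}=-\widehat{a}_{e}((L_{11}^{-1})_{n-1,n-1})^{2}=-\widehat{a}_{e}(R_{n-1,n}(G))^{2}$. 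Your route through Theorem \ref{thmww}$(a)$ and the quadratic form $(\textbf{1}_{i}-\textbf{1}_{j})^{T}L^{\dag}\widehat{L}L^{\dag}(\textbf{1}_{i}-\textbf{1}_{j})$ produces the same square directly, with no relabeling and no appeal to the block representation. For part $(b)$ the paper computes the characteristic polynomial of $\widehat{L}$ to find the spectrum $\{2\widehat{a}_{e},0,\dots,0\}$ and then argues separately that the unit eigenvector $q_{1}$ for $2\widehat{a}_{e}$ satisfies $\textbf{1}^{T}q_{1}=0$ and hence $\sum_{i=1}^{n-1}(q_{1}^{T}p_{i})^{2}=1$; you simply read off $q_{1}=(\textbf{1}_{i}-\textbf{1}_{j})/\sqrt{2}$ from the factorization and apply the Rayleigh-quotient bound for $(L^{\dag})^{2}$ on $\textbf{1}^{\perp}$, which is the identical final step. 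Your observation that the ratio $\Delta Kf(G_{e}^{w})/(-2n\widehat{a}_{e})$ is sign-independent is also the reason the paper states the bound in that normalized form. Net effect: your argument is shorter and unifies $(a)$ and $(b)$, while the paper's stays closer to the block machinery it develops in Theorem \ref{hunk} and to an explicit spectral computation of $\widehat{L}$.
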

\begin{proof}
Let $L_{w}=L+\widehat{L}\varepsilon=\left[\begin{array}{cc}
\widetilde{L}_{11} & \widetilde{L}_{12} \\
\widetilde{L}_{12}^{T} & \widetilde{L}_{22}
\end{array}
\right]$ be the  Laplacian matrix of $G^{w}_{e}$, where $\widetilde{L}_{11}=L_{11}+\widehat{L}_{11}\varepsilon\in \mathbb{D}^{(n-1)\times(n-1)}$.

$(a)$ Without loss of generality, assume $i=n-1,j=n$. From Theorem \ref{hunk}$(b)$,
we have $$\Delta R_{n-1,n}(G_{e}^{w})=-(L_{11}^{-1}\widehat{L}_{11}L_{11}^{-1})_{n-1,n-1}.$$
Since only the  edge $e=\{n-1,n\}$ of $G$ having the resistance perturbation $-\widehat{a}_{e}$, it follows that \begin{align}\nonumber
(\widehat{L}_{11})_{st}=
\begin{cases}
\widehat{a}_{e}, & s=t=n-1, \\
0, & \textrm{otherwise}.
\end{cases}
\end{align}
Thus,
\begin{align}\nonumber
\Delta R_{n-1,n}(G^{w}_{e})=-(L_{11}^{-1}\widehat{L}_{11}L_{11}^{-1})_{n-1,n-1}&
=-\widehat{a}_{e}((L_{11}^{-1})_{n-1,n-1})^{2}.
\end{align}
If we set $\widehat{L}_{11}=0$ in Theorem \ref{hunk}$(b)$, then we obtain $(L_{11}^{-1})_{n-1,n-1}=R_{n-1,n}(G)$.
 So $$\Delta R_{n-1,n}(G^{w}_{e})=-\widehat{a}_{e}((L_{11}^{-1})_{n-1,n-1})^{2
}=-\widehat{a}_{e}(R_{n-1,n}(G))^{2}.$$ By Lemma \ref{lm1},
 it follows that $$\Delta R_{n-1,n}(G^{w}_{e})=-\widehat{a}_{e}\frac{\tau_{e}^{2}}{\tau^{2}}.$$

 $(b)$
 The characteristic polynomial of $\widehat{L}$ is \begin{align}\nonumber \textrm{det} (\mu I-\widehat{L} )&=\mu^{n-2}\cdot\textrm{det}\left(\left[\begin{array}{cc}
\mu-\widehat{a}_{e} & -\widehat{a}_{e} \\
-\widehat{a}_{e} & \mu-\widehat{a}_{e}
\end{array}
\right]\right)\\ \nonumber &=\mu^{n-2}((\mu-\widehat{a}_{e})^{2}-\widehat{a}_{e}^{2})=\mu^{n-1}(\mu-2\widehat{a}_{e}).\end{align}
 Then the eigenvalues of $\widehat{L}$ are $2\widehat{a}_{e}$ and 0 with the multiplicity  $n-1$.  Let $\mu_{1},\cdots,\mu_{n}$ be the eigenvalues of $\widehat{L}$. Without loss of generality, let $\mu_{1}=2\widehat{a}_{e}$, $\mu_{2}=\cdots=\mu_{n}$=0.
By  Theorem \ref{tj}$(b)$ and  Equation  \eqref{3.14}, we have  $$\Delta Kf(G^{w}_{e})=-n tr(L^{\dag}\widehat{L}L^{\dag})=-n tr((L^{\dag})^{2}\widehat{L})=-n tr((L^{\dag})^{2}\mu_{1}q_{1}q_{1}^{T})=
-2n\widehat{a}_{e}q_{1}^{T}(L^{\dag})^{2}q_{1}.$$  Using the spectral decomposition of $(L^{\dag})^{2}$ given in  Equation   \eqref{mpv}, we have $$\Delta Kf(G^{w}_{e})=
-2n\widehat{a}_{e}q_{1}^{T}\sum^{n-1}_{i=1}\frac{1}{\lambda_{i}^{2}} p_{i}p_{i}^{T}q_{1}=-2n
\widehat{a}_{e}\sum^{n-1}_{i=1}\frac{1}{\lambda_{i}^{2}} q_{1}^{T}p_{i}p_{i}^{T}q_{1}.$$
 Let $q_{1}$ be the unit  eigenvector of  $\widehat{L}$  corresponding to the eigenvalue $\mu_{1}=2\widehat{a}_{e}$. Then $ \widehat{L}q_{1}=2\widehat{a}_{e}q_{1}$. Note that $\widehat{a}_{e}\neq0$.
Since the column  sums of $\widehat{L}$ are zero, we have $$\textbf{1}^{T}\widehat{L}q_{1}=2\widehat{a}_{e}\textbf{1}^{T}q_{1}=0,$$ which implies $\textbf{1}^{T}q_{1}=0.$
Then $$\sum^{n-1}_{i=1} q_{1}^{T}p_{i}p_{i}^{T}q_{1}= q_{1}^{T}(I-p_{n}p_{n}^{T})q_{1}=1-q_{1}^{T}p_{n}p_{n}^{T}q_{1}=
1-q_{1}^{T}\frac{1}{\sqrt{n}}\textbf{1}\frac{1}{\sqrt{n}}\textbf{1}^{T}q_{1}=1.$$
 Since the eigenvalues of $(L^{\dag})^{2}$ are $$\frac{1}{\lambda_{1}^{2}},\cdots,\frac{1}{\lambda_{n-1}^{2}},0$$ and $q_{1}^{T}p_{i}p_{i}^{T}q_{1}=(q_{1}^{T}p_{i})^{2}\geq 0$ for $i\in[n-1]$,
then
$$ \frac{1}{\lambda_{1}^{2}}\sum^{n-1}_{i=1} q_{1}^{T}p_{i}p_{i}^{T}q_{1} \leq \frac{\Delta Kf(G^{w}_{e})}{-2n\widehat{a}_{e}}=\sum^{n-1}_{i=1}\frac{1}{\lambda_{i}^{2}} q_{1}^{T}p_{i}p_{i}^{T}q_{1}
\leq  \frac{1}{\lambda_{n-1}^{2}}\sum^{n-1}_{i=1} q_{1}^{T}p_{i}p_{i}^{T}q_{1}.$$
Therefore, $$ \frac{1}{\lambda_{1}^{2}} \leq \frac{\Delta Kf(G^{w}_{e})}{-2n\widehat{a}_{e}}
\leq  \frac{1}{\lambda_{n-1}^{2}}.$$
\end{proof}

\section*{Acknowledgement}
This work is supported by the National Natural Science Foundation of China (No. 12071097, 12371344),
the Natural Science Foundation for The Excellent Youth Scholars of the Heilongjiang Province (No. YQ2022A002, YQ2024A009) and the Fundamental Research Funds for the Central Universities.

\section*{References}%\marginpar{\tiny need to be polished}
\bibliographystyle{plain}
\bibliography{scholar}
%\begin{thebibliography}{}
%\bibitem{R.}R. Boulet, B. Jouve. \textit{The lolipop graph is determined by its spectrum.} Electronic Journal of Combinatorics, 15(1), \#R74, 2008.
%\end{thebibliography}
\end{spacing}
\end{document}